\newcommand{\balpha}{{\boldsymbol \alpha}}
\newcommand{\blambda}{{\boldsymbol \lambda}}
\newcommand{\bnu}{{\boldsymbol \nu}}
\newcommand{\bt}{{\bold t}}
\newcommand{\BP}{{\mathbf P}}
\newcommand{\cC}{{\mathcal C}}
\newcommand\rank{\mathop{\rm rank}\nolimits}
\newcommand\coker{\mathop{\rm coker}\nolimits}
\newcommand{\Tr}{\mathop{\rm Tr}\nolimits}
\newcommand\Hom{\mathop{\rm Hom}\nolimits}
\newcommand\End{\mathop{\rm End}\nolimits}
\newcommand\Pic{\mathop{\rm Pic}\nolimits}
\newcommand\Spec{\mathop{\rm Spec}\nolimits}
\newcommand\RH{\mathop{\bf RH}\nolimits}
\newcommand{\res}{\mathop{\sf res}\nolimits}
\newtheorem{Theorem}{Theorem}[section]
\newtheorem{Remark}{Remark}[section]
\newtheorem{Definition}{Definition}[section]
\begin{document}

\title{Moduli of regular singular parabolic connections of spectral type
on smooth projective curves}

\thanks{Partly supported by Grant-in Aid
for Scientific Research  (24224001),  (15K13427 ),   (22740014), (26400043)
}

\keywords{Regular singular connection of spectral type, Moduli space of parabolic connections, Symplectic 
structure, Riemann-Hilbert correspondence, Geometric Painlev\'e property, 
Isomonodromic deformation of linear connection, Higher dimensional Painlev\'{e} equations}
\author{Michi-aki Inaba}
\author{Masa-Hiko Saito}

\address{Department of Mathematics, Graduate School of Science, 
Kyoto University, Kyoto, 606-8502, Japan}
\email{inaba@math.kyoto-u.ac.jp}
\address{Department of Mathematics, Graduate School of Science, 
Kobe University, Kobe, Rokko, 657-8501, Japan}
\email{mhsaito@math.kobe-u.ac.jp}
\subjclass{ 14D20, 34M55, 34M55}

\date{}

\maketitle

\begin{abstract}
We define a moduli space of stable regular singular parabolic connections of
spectral type on smooth projective curves and show the smoothness of
the moduli space and give a relative symplectic structure on the moduli space.
Moreover, we define the isomonodromic deformation on this moduli space
and prove the geometric Painlev\'e property of the isomonodromic deformation.
\end{abstract}

\section*{Introduction}

Let $T$ be a smooth covering of the moduli stack of
$n$-pointed smooth projective curves of genus $g$.
Take a universal family $(\cC,\tilde{\bt})$ over $T$.
In the paper \cite{Inaba-1}, the first author constructed
the relative moduli space
\[
 M^{\balpha}_{\cC/T}(\tilde{\bt},r,d)\longrightarrow
 T\times\Lambda^{(n)}_r(d)
\]
of regular singular $\balpha$-stable parabolic connections
of rank $r$ and degree $d$ on $\cC/T$.
Here $\balpha=(\alpha^{(i)}_j)^{1\leq i\leq n}_{1\leq j\leq r}$
are rational numbers such that
$0<\alpha^{(i)}_1<\cdots<\alpha^{(i)}_r<1$
and that
$\alpha^{(i)}_j\neq\alpha^{(i')}_{j'}$ for any $(i,j)\neq(i',j')$.
$\Lambda^{(n)}_r(d)$ is given by
\[
 \Lambda^{(n)}_r(d):=\left\{(\lambda^{(i)}_j)^{1\leq i\leq n}_{0\leq j\leq r-1}
 \in\mathbf{C}^{nr}\left|
 d+\sum_{i=1}^n\sum_{j=0}^{r-1}\lambda^{(i)}_j=0\right\}\right..
\]
Then for any point $(x,\blambda)\in T\times\Lambda^{(n)}_r(d)$,
the fiber
$M^{\balpha}_{\cC/T}(\tilde{\bt},r,d)_{(x,\blambda)}$
is smooth of dimension
$2r^2(g-1)+nr(r-1)+2$.
He also constructed the algebraic splitting
\[
 D:\pi^*(\Theta_T)\longrightarrow
 \Theta_{M^{\balpha}_{\cC/T}(\tilde{\bt},r,d)}
\]
of the canonical surjection
$\Theta_{M^{\balpha}_{\cC/T}(\tilde{\bt},r,d)}
\rightarrow \pi^*(\Theta_T)$,
where $\pi:M^{\balpha}_{\cC/T}(\tilde{\bt},r,d)\rightarrow T$
is the structure morphism.
The subbundle
$D(\pi^*(\Theta_T))\subset\Theta_{M^{\balpha}_{\cC/T}(\tilde{\bt},r,d)}$
satisfies the integrability condition
and the associated foliation
${\mathcal F}_{M^{\balpha}_{\cC/T}(\tilde{\bt},r,d)}$
is nothing but
the isomonodromic deformation.
One of the important results in \cite{Inaba-1} is
that the isomonodromic deformation
determined by $D(\pi^*(\Theta_T))$
has the geometric Painlev\'e property.

There is a locus $Y$ in $M^{\balpha}_{\cC/T}(\tilde{\bt},r,d)$
such that $(E,\nabla,\{l^{(i)}_j\})\in M^{\balpha}_{\cC/T}(\tilde{\bt},r,d)$
lies in $Y$ if and only if the residue matrix of $\nabla$ at $t_i$ is given by
\[
 (\dag) \quad
 \begin{pmatrix}
  \mu^{(i)}_1I_{r_{s_i-1}} & *  & * & *\\
  0 & \mu^{(i)}_2I_{r_{s_i-2}} & * & * \\
  \vdots & \vdots &\ddots & \vdots \\
  0 & 0 & \cdots & \mu^{(i)}_{s_i}I_{r_{0}}
 \end{pmatrix}.
\]
We can easily see that the locus $Y$ is preserved by
the isomonodromic deformation.
However, the dimension of $Y$ is too big
because it parameterizes the parabolic structure
$\{l^{(i)}_j\}$.
So we contract $Y$ by forgetting the data $\{l^{(i)}_j\}$
and obtain a moduli space $\overline{Y}$.
We say $\overline{Y}$ the moduli space of
regular singular parabolic connections of
spectral type $(\dag)$.
By construction, $\overline{Y}$ is preserved by
the isomonodromic deformation.
So we obtain a low dimensional phase space
arising from the isomonodromic deformation.
Such low dimensional phase spaces get an
attention from the viewpoint of
the theory of integrable systems.

T.~Oshima studied in \cite{Oshima-1}
the isomonodromic deformation of the Fuchsian system
of spectral types in detail. In particular, 
he studied additive Deligne-Simpson problem on Fuchsian systems on  trivial 
bundles on  $\BP^1$  
and a combinatorial structure 
of middle convolutions and their relation to a Kac-Mooody root system 
discovered by Crawley-Boevey \cite{Crawley-Boevey}.  


Let us fix a smooth projective curve $C$ of genus $g$  
and a set of $n$-distinct points $\bt=(t_1, \cdots, t_n)$ on $C$. 
Spectral types are given by tuples $(r_j^{(i)})^{1 \leq i \leq n}_{0 \leq j \leq s_i-1}$
 of partitions of integers, where $r$ is a fixed rank of vector bundles and 
at each singular point $t_i$, $r^{(i)}_j$ are positive integers such that 
$\sum_{j=0}^{s_i-1} r^{(i)}_j = r$.
Fixing a degree $d$ and  a spectral type  $(r_j^{(i)})^{1 \leq i \leq n}_{0 \leq j \leq s_i-1}$, let us take 
any local exponents $\bnu \in  N(d,  (r_j^{(i)}\}^{1 \leq i \leq n})$ (see \ref{eq:locexp}).  
Then  we can define the moduli space $M^{\balpha}(C, \bt,\bnu,  d, (r^{(i)}_j) )$ of $\balpha$-stable 
$\bnu$-parabolic connections on $(C, \bt)$ of spectral type 
$(r^{(i)}_j)$.  In \S 1, we show that $M^{\balpha}(C, \bt,\bnu,  d, (r^{(i)}_j) )$ is a smooth quasi-projective scheme of 
dimension (see Theorem \ref{thm:dim})
\begin{equation}\label{eq:dim}
\dim M^{\balpha}(C, \bt, \bnu, d,  (r^{(i)}_j))= 2r^2(g-1)+2+2\sum_{i=1}^n\sum_{j=0}^{s_i-1}\sum_{j'>j}r^{(i)}_jr^{(i)}_{j'}
\end{equation}
If we set 
\begin{equation}\label{eq:halfdim}
N = r^2(g-1) + 1 +  n \frac{r(r-1)}{2}
\end{equation}
one can rewrite as 
\begin{equation}\label{eq:dim2}
\dim M^{\balpha}(C, \bt, \bnu, d,  (r^{(i)}_j)) = 2( N- \sum_{i=1}^n \sum_{j=0}^{s_i-1}\frac{r_j^{(i)}(r_j^{(i)}-1)}{2}).
\end{equation}
The moduli space of $\alpha$-stable parabolic connections of spectral types 
$(r^{(i)}_j)$ 
is a deformation of the moduli space of $\balpha$-stable parabolic Higgs bundles on $(C. \bt)$ of 
spectral types $(r^{(i)}_j)$.   Then the genus of spectral curves of parabolic Higgs bundles should be 
the half of dimension of the moduli spaces.  The formula suggests that the genus of spectral curves equal to 
$ N- \sum_{i=1}^n \sum_{j=0}^{s_i-1}\frac{r_j^{(i)}(r_j^{(i)}-1)}{2} $ where $N$ is the genus of 
spectral curve with trivial spectral types  $r^{(i)}_j=1$.
It will be interesting to see the explicit geometry of the moduli space of parabolic connections and 
parabolic Higgs bundles. An approach by using the apparent singularities and their duals will be treated in 
\cite{S-Sz}.  

For example, if we consider the case $g=0, n=4, r=2, d=-1$ and $r^{(i)}_j =1$ for all $i,j$, 
then the spectral type will be denoted as (11, 11, 11, 11).  
The corresponding moduli spaces $M$ are nothing but the fiber of the phase space, or 
Okamoto's space of initial conditions of 
Painlev\'e VI equations and $\dim M = 2$.

H.~Sakai studied in \cite{Sakai-1} the Fuchsian system of spectral type which gives $4$-dimensional
isomonodromic deformation equations.  Here the $4$-dimensional means that the dimension of the 
moduli space of parabolic connection of spectral type is $4$. 
The interesting point of \cite{Sakai-1} is that
a Fuji-Suzuki system (\cite{Fuji-Suzuki-1}, \cite{Fuji-Suzuki-2})
and a Sasano system (\cite{Sasano}) can be obtained from the isomonodroimc deformations of 
the Fuchsian system of certain spectral types. Including them, there exists only 4-types of 
$4$-dimensional isomonodromic deformation equations of Fuchsian systems of spectral types 
over $\BP^1$.  
They are corresponding to the spectral types 
$r=2,n=5,(11, 11, 11, 11, 11)$ (Garnier), 
$r=3,n=4, (21, 21, 111, 111)$ (the Fuji-Suzuki), $r=4,n=4, (31, 22, 22, 1111)$ 
(Sasano) and $r=4, n=4, (22, 22, 22, 211)$ (the sixth matrix 
Painlev\'e ).  

The main results in this paper are the smoothness and
a symplectic structure of the moduli space of 
stable regular singular parabolic connections of any
spectral type on smooth projective curves over $\mathbf{C}$. (Cf. Theorem \ref{thm:smooth} and Theorem \ref{thm:sym}). 
Moreover, the more important result (cf. Theorem \ref{gpp-thm}) 
is that the isomonodromic deformation
defined on the moduli space of regular singular parabolic connections
of spectral type has the geometric Painlev\'e property. 
So we can say that the moduli space of stable regular singular parabolic connections
of spectral type is the space of initial conditions for the isomonodromic deformations.

Here the definition of the geometric Painlev\'e property
is given in \cite{IISA} and the geometric Painlev\'e property
implies the usual Painlev\'e property.

As a corollary, $4$-dimensional isomonodromic deformation 
considered by H.~Sakai in \cite{Sakai-1} has the Painlev\'e property. 

It will be also interesting to consider similar problems for parabolic connections 
with irregular singularities of fixing spectral types.  
Classifications of spectral types of dimension 4 cases 
are treated in \cite{KNS-1} and \cite{KNS-2}.  

\section{Definition and properties of the moduli space
of regular singular parabolic connections of spectral type}

Let $C$ be a smooth projective irreducible curve over $\mathbf{C}$
of genus $g$.
We set
\[
 T_n:=\left\{ \bt=(t_1,\ldots,t_n)\in C\times\cdots\times C|
 \text{$t_i\neq t_j$ for $i\neq j$}\right\}.
\]
Let $r,d$ be integers with $r>0$.
For each $i$ with $1\leq i\leq n$,
take positive integers $r^{(i)}_0,\ldots,r^{(i)}_{s_i-1}$
such that $r=\sum_{j=0}^{s_i-1}r^{(i)}_j$ for any $i$.
Set
\begin{equation}\label{eq:locexp}
 N(d,(r^{(i)}_j)):=
 \left\{ (\nu^{(i)}_j)^{1\leq i\leq n}_{0\leq j\leq s_i-1}
 \left|
 \begin{array}{l}
  \text{$\nu^{(i)}_j\in\mathbf{C}$ for any $i,j$ and} \\
  d+\sum_{i=1}^n\sum_{j=0}^{s_i-1}r^{(i)}_j\nu^{(i)}_j=0
 \end{array}
 \right\}\right..
\end{equation}

\begin{Definition}\rm
 Take $\bt\in T_n$ and
 $\bnu=(\nu^{(i)}_j)\in N(d,(r^{(i)}_j))$.
 We say $(E,\nabla,\{l^{(i)}_j\})$ is a regular singular $(\bt,\bnu)$-parabolic
 connection of spectral type $(r^{(i)}_j)^{1\leq i\leq n}_{0\leq j\leq s_i-1}$ if
 \begin{itemize}
 \item[(1)] $E$ is an algebraic vector bundle on $C$ of rank $r$ and degree $d$,
 \item[(2)] $\nabla\colon E\longrightarrow E\otimes\Omega^1_C(t_1+\cdots t_n)$
 is a connectoin,
 \item[(3)] for each $i$, $E|_{t_i}=l^{(i)}_0\supset l^{(i)}_1\supset\cdots\supset
 l^{(i)}_{s_i-1}\supset l^{(i)}_{s_i}=0$ is a filtration
 such that $\dim_{\mathbf{C}}(l^{(i)}_j/l^{(i)}_{j+1})=r^{(i)}_j$ and
 \item[(4)] $(\res_{t_i}(\nabla)-\nu^{(i)}_j\mathrm{id})(l^{(i)}_j)
 \subset l^{(i)}_{j+1}$ for any $i,j$.
 \end{itemize}
\end{Definition}

Take rational numbers $\balpha=(\alpha^{(i)}_j)^{1\leq i\leq n}_{1\leq j\leq s_i}$
such that
$0<\alpha^{(i)}_1<\alpha^{(i)}_2<\cdots<\alpha^{(i)}_{s_i}<1$
for any $i,j$ and $\alpha^{(i)}_j\neq\alpha^{(i')}_{j'}$
for $(i,j)\neq(i',j')$.

\begin{Definition}\rm
 A regular singular $(\bt,\bnu)$-parabolic connection
 $(E,\nabla,\{l^{(i)}_j\})$ of spectral type $(r^{(i)}_j)$
 is said to be $\balpha$-stable (resp.\ $\balpha$-semistable) if
 \begin{align*}
  & \genfrac{}{}{}{}{\deg F+\sum_{i=1}^n\sum_{j=1}^{s_i}\alpha^{(i)}_j
  \dim_{\mathbf{C}}((F|_{t_i}\cap l^{(i)}_{j-1})/(F|_{t_i}\cap l^{(i)}_j))}{\rank F} \\
  &\genfrac{}{}{0pt}{}{<}{(\text{resp.\ $\leq$})}
  \genfrac{}{}{}{}{\deg E + \sum_{i=1}^n\sum_{j=1}^{s_i}\alpha^{(i)}_j
  \dim_{\mathbf{C}}(l^{(i)}_{j-1}/l^{(i)}_j)}{\rank E}
 \end{align*}
for any subbundle $0\neq F\subsetneq E$
with $\nabla(F)\subset F\otimes\Omega^1_C(t_1+\cdots+t_n)$.
\end{Definition}

Let $T$ be a smooth algebraic scheme which is a smooth 
covering of the moduli stack of $n$-pointed smooth
projective irreducible curves of genus $g$ over $\mathbf{C}$ and
$({\mathcal C},\tilde{\bt})$ be the universal family over $T$
($\tilde{\bt}=(\tilde{t}_1,\ldots,\tilde{t}_n)$, where
each $\tilde{t_i}$ is a section of $\cC\rightarrow T$
and $\tilde{t}_i\cap\tilde{t}_j=\emptyset$ for any $i\neq j$).

\begin{Theorem}\label{thm:ex}
 There exists a relative coarse moduli scheme
 $\pi\colon M^{\balpha}_{\cC/T}(d,(r^{(i)}_j))\rightarrow
 T\times N(d,(r^{(i)}_j)))$ of
 $\balpha$-stable regular singular parabolic connections
 of spectral type $(r^{(i)}_j)$.
 Moreover $\pi$ is a quasi-projective morphism.
\end{Theorem}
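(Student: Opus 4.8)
The plan is to construct $M^{\balpha}_{\cC/T}(d,(r^{(i)}_j))$ as a GIT quotient, following the strategy the first author used in \cite{Inaba-1} for parabolic connections without the spectral type constraint. Since the construction is relative over the base $S := T\times N(d,(r^{(i)}_j))$ and the extra conditions (the prescribed flag dimensions $r^{(i)}_j$ together with the residue condition (4)) are closed and fibrewise, I would first carry out the argument over a geometric point of $S$ and then globalize, checking that each step is flat and compatible in families. The three ingredients are: (i) boundedness of the underlying bundles, so that the family of $\balpha$-stable objects is parameterized by a scheme of finite type; (ii) construction of a quasi-projective parameter scheme $R$ carrying a reductive group action whose orbits are the isomorphism classes; and (iii) the identification of $\balpha$-stability with GIT stability for a suitable linearization, which yields the quotient as a quasi-projective scheme.

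For boundedness, the key point is that the logarithmic connection $\nabla$ constrains the instability of $E$: for any subbundle $F\subset E$ the $\cO_C$-linear map $\overline{\nabla}\colon F\to (E/F)\otimes\Omega^1_C(t_1+\cdots+t_n)$ induced by $\nabla$ vanishes precisely when $F$ is $\nabla$-invariant, so a subbundle violating the slope bound is either $\nabla$-invariant (hence excluded by $\balpha$-stability) or admits a nonzero such map, which bounds $\deg F$ from above. Thus the slopes of subsheaves of $E$ are bounded, the family of such $E$ is bounded, and after twisting by a sufficiently large $m$ we may assume $E(m)$ is globally generated with $H^1(E(m))=0$ and a fixed Hilbert polynomial. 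A choice of isomorphism $V\cong H^0(E(m))$ with a fixed vector space $V$ then exhibits $E$ as a quotient of $V\otimes\cO_C(-m)$, parameterized by an open subscheme of a Quot scheme $\Quot$.

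Next I would assemble the parameter scheme. Over $\Quot$ there is a universal bundle $\tilde E$; the parabolic structures of dimension type $(r^{(i)}_j)$ are parameterized by the relative flag bundle $\mathrm{Fl}\to\Quot$ of the fibres $\tilde E|_{\tilde t_i}$, and the logarithmic connections form an affine-space bundle modeled on $\End(\tilde E)\otimes\Omega^1_{\cC/T}(\sum_i\tilde t_i)$. Once the flag is fixed, the residue condition (4), namely $(\res_{\tilde t_i}(\nabla)-\nu^{(i)}_j\,\mathrm{id})(l^{(i)}_j)\subset l^{(i)}_{j+1}$, is a closed linear condition on $\nabla$, so it cuts out a closed subscheme $R$ of this total space. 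The group $G=GL(V)$ (acting through $PGL(V)$, since scalars act trivially) acts on $R$ by change of the chosen trivialization of $H^0(E(m))$, and two points of $R$ lie in the same orbit exactly when the corresponding $(\bt,\bnu)$-parabolic connections of spectral type $(r^{(i)}_j)$ are isomorphic.

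Finally, I would linearize the $G$-action on $R$ using the Quot-scheme polarization twisted by the parabolic weights $\balpha$, and verify that the Hilbert--Mumford numerical criterion for a one-parameter subgroup reproduces exactly the slope inequality defining $\balpha$-(semi)stability, with the weights $\alpha^{(i)}_j$ entering through the parabolic terms $\dim_{\C}((F|_{t_i}\cap l^{(i)}_{j-1})/(F|_{t_i}\cap l^{(i)}_j))$. Taking the GIT quotient $R^{ss}/\!\!/G$ produces a quasi-projective scheme, and restricting to the stable locus $R^s$ gives a geometric quotient which is the relative coarse moduli scheme $M^{\balpha}_{\cC/T}(d,(r^{(i)}_j))$ with its quasi-projective structure morphism $\pi$ to $S$. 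The main obstacle I anticipate is this last matching of GIT stability with $\balpha$-stability: because condition (4) ties the flag to $\res_{t_i}(\nabla)$, one must check that destabilizing one-parameter subgroups can be chosen compatibly with the filtration, so that the numerical computation sees precisely the parabolic contribution and no spurious terms from the spectral type constraint. Controlling boundedness uniformly over all $\bnu\in N(d,(r^{(i)}_j))$ is a secondary technical point, handled by noting that the residue eigenvalues $\nu^{(i)}_j$ vary in a fixed affine space and enter the estimates only through the fixed data $d,r,(r^{(i)}_j)$.
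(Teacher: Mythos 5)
Your overall plan---boundedness, a Quot-scheme parameter space carrying flags and connections, then a GIT quotient whose stability matches $\balpha$-stability---is the same family of constructions that the paper relies on; note that the paper gives no independent argument at all, but simply invokes [\cite{Inaba-1}, Theorem 2.1], which in turn rests on [\cite{IIS-1}, Theorem 5.1]. Your boundedness step (a destabilizing subsheaf is either $\nabla$-invariant, hence excluded, or supports a nonzero $\cO$-linear map $F\to (E/F)\otimes\Omega^1_C(t_1+\cdots+t_n)$, which bounds its degree) is correct and standard, and treating the flag dimensions $r^{(i)}_j$ and the residue condition as closed conditions in the relative setting over $T\times N(d,(r^{(i)}_j))$ is also fine. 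The genuine gap is in the decisive step, the identification of GIT stability with $\balpha$-stability.

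You propose to take the parameter scheme $R$ inside the total space of the affine bundle of connections over the flag bundle over $\Quot$, linearized by the Quot polarization twisted by the weights $\balpha$---a line bundle pulled back from the Quot-and-flag directions. This fails for two reasons. First, $R$ is not proper, and the Hilbert--Mumford numerical criterion you plan to use is only valid on a projective scheme; on a quasi-projective parameter space it characterizes neither semistability nor stability. Second, and more fatally, even formally the numerical computation cannot produce the right inequality: since the linearization is pulled back along the affine-bundle projection, the weight of a one-parameter subgroup of $GL(V)$ at a point of $R$ (when the limit exists at all) is computed purely from the underlying quotient sheaf and the flags, and a subspace $H\subset V$ generates an arbitrary $\cO$-submodule of $E$, with no reason to be $\nabla$-invariant. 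The inequality you would extract is therefore parabolic stability of the underlying parabolic bundle, quantified over \emph{all} subbundles $F\subset E$, which is strictly stronger than $\balpha$-stability of the connection, which tests only $\nabla$-invariant subbundles. Since there exist $\balpha$-stable connections whose underlying parabolic bundle is unstable (any irreducible connection on an unstable bundle), your quotient would omit part of the moduli space. The construction cited by the paper avoids both problems in the style of Simpson's $\Lambda$-module method: in [\cite{IIS-1}, Theorem 5.1] the connection is encoded into the quotient datum itself (objects are parabolic $\Lambda^1_D$-triples, i.e.\ $\Phi$-connections, parameterized inside a projective Quot-type scheme of modules over the sheaf $\Lambda^1_D$ of logarithmic differential operators), so that the Hilbert--Mumford criterion applies, and the subsheaves arising in the one-parameter-subgroup analysis are automatically $\Lambda^1_D$-submodules, i.e.\ $\nabla$-invariant; this is exactly how the stability inequality restricted to $\nabla$-invariant subbundles emerges from GIT. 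With that replacement of the parameter space, the rest of your relative argument goes through as you describe.
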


\begin{proof}
Proof is the same as that of [\cite{Inaba-1}, Theorem 2.1]
which essentially uses [\cite{IIS-1}, Theorem 5.1]
and we omit the proof here.
\end{proof}

\begin{Theorem}\label{thm:smooth}
 The relative moduli space
 $\pi\colon M^{\balpha}_{\cC/T}(d,(r^{(i)}_j))\rightarrow
 T\times N(d,(r^{(i)}_j)))$ is smooth.
\end{Theorem}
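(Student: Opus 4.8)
The plan is to prove smoothness by the standard deformation-theoretic criterion: I would show that the relative moduli space is formally smooth over $T\times N(d,(r^{(i)}_j))$ by establishing that the obstruction to infinitesimal deformations vanishes. Concretely, fix a point corresponding to an $\balpha$-stable parabolic connection $(E,\nabla,\{l^{(i)}_j\})$ over a fiber, and identify the tangent space and the obstruction space with the hypercohomology of an appropriate two-term complex. The natural candidate is a complex of the form
\begin{equation*}
 \mathcal{F}^\bullet\colon\quad
 \mathcal{F}^0\xrightarrow{\ \nabla\ }\mathcal{F}^1,
\end{equation*}
where $\mathcal{F}^0$ is the subsheaf of $\End(E)$ consisting of endomorphisms preserving the parabolic filtration $\{l^{(i)}_j\}$, and $\mathcal{F}^1$ is the subsheaf of $\End(E)\otimes\Omega^1_C(t_1+\cdots+t_n)$ consisting of those sections whose residue at each $t_i$ is \emph{strictly} compatible with the filtration and the spectral decomposition dictated by $(r^{(i)}_j)$ and $\bnu$. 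The map is $a\mapsto\nabla a-a\nabla=[\nabla,a]$. Then $\mathbb{H}^1(\mathcal{F}^\bullet)$ is the relative tangent space and $\mathbb{H}^2(\mathcal{F}^\bullet)$ is the obstruction; smoothness follows once we show $\mathbb{H}^2(\mathcal{F}^\bullet)=0$.

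First I would set up the precise local conditions defining $\mathcal{F}^0$ and $\mathcal{F}^1$ at each singular point, making sure they correctly encode the spectral type $(r^{(i)}_j)$: the filtration-preserving condition on $\mathcal{F}^0$ and the strict filtration-lowering residue condition on $\mathcal{F}^1$ must be dual to one another with respect to the trace pairing, so that the complex is (up to a twist by $\Omega^1_C$) self-dual. Next I would invoke Serre duality for hypercohomology of two-term complexes to identify $\mathbb{H}^2(\mathcal{F}^\bullet)$ with the dual of $\mathbb{H}^0$ of the Serre-dual complex. The self-duality of $\mathcal{F}^\bullet$ (tensored with $\Omega^1_C$) then reduces the vanishing of $\mathbb{H}^2$ to the vanishing of $\mathbb{H}^0(\mathcal{F}^\bullet)$, or more precisely to computing the space of parabolic endomorphisms $a$ that are horizontal, i.e.\ $[\nabla,a]=0$.

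The key step, then, is to show that any such horizontal parabolic endomorphism is a scalar. Here I would use $\balpha$-stability: a horizontal endomorphism commuting with $\nabla$ gives, via its eigenspaces or its kernel and image, $\nabla$-invariant subbundles compatible with the parabolic structure, and stability forces $a$ to be a homothety. This shows $\mathbb{H}^0(\mathcal{F}^\bullet)\cong\mathbf{C}$, and by the self-duality argument the trace pairing splits off this scalar part so that the reduced $\mathbb{H}^2$ vanishes; the leftover scalars on the two ends cancel because the trace map and the degree/local-exponent relation defining $N(d,(r^{(i)}_j))$ (namely $d+\sum_i\sum_j r^{(i)}_j\nu^{(i)}_j=0$) are exactly compatible.

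I expect the main obstacle to be the bookkeeping at the singular points: one must verify that the residue conditions imposed on $\mathcal{F}^1$ are precisely the ones making the complex self-dual \emph{and} simultaneously the ones that correctly describe deformations preserving both the parabolic filtration and the spectral type. Getting these local conditions exactly right — so that the filtration-preserving part and the strictly-filtration-lowering part pair perfectly under the residue trace pairing at each $t_i$ — is delicate, and it is where the specific combinatorics of the partition $(r^{(i)}_j)$ enters. Once this local duality is correctly formulated, the global argument (Serre duality plus stability) is essentially the same as in the rank-$r$ parabolic case treated in \cite{Inaba-1}, and I would largely follow that template.
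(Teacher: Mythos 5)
Your skeleton (two-term deformation complex, self-duality $\mathcal{F}^1\cong(\mathcal{F}^0)^{\vee}\otimes\Omega^1_C$ plus Serre duality, and $\balpha$-stability forcing horizontal parabolic endomorphisms to be scalars) is the same machinery the paper uses, but there is a genuine gap at the pivot of your argument: for the \emph{full} complex $\mathcal{F}^{\bullet}$ the announced vanishing $\mathbf{H}^2(\mathcal{F}^{\bullet})=0$ is false, and cannot be proved. Since we are in characteristic zero, the complex splits off its scalar part, $\mathcal{F}^{\bullet}\cong(\mathcal{O}_C\xrightarrow{d}\Omega^1_C)\oplus\mathcal{F}_0^{\bullet}$ (note that the residue condition on $\mathcal{F}^1$ forces a scalar-valued section $c\cdot\mathrm{id}$ to have $\res_{t_i}(c)=0$, and any $b\in\mathcal{F}^1$ has nilpotent residues, hence $\Tr(b)\in\Omega^1_C$). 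Therefore $\mathbf{H}^2(\mathcal{F}^{\bullet})\supset\mathbf{H}^2(\Omega^{\bullet}_C)\cong\mathbf{C}$; equivalently, it is dual to the one-dimensional space $\mathbf{H}^0(\mathcal{F}^{\bullet})\cong\mathbf{C}$ that you yourself exhibit. So the naive criterion ``the obstruction space vanishes'' is unavailable, and your whole proof rests on the closing assertion that ``the leftover scalars on the two ends cancel,'' which is exactly the statement that the scalar component of the actual obstruction class vanishes. That is not automatic and is never argued: you would need to prove that the trace of the obstruction class equals the obstruction class to lifting the determinant pair $(\det E,\det\nabla)$ with prescribed residues, and then that the latter vanishes.

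The paper closes precisely this gap by a change of formulation rather than by your cancellation: it considers the determinant morphism $\det\colon M^{\balpha}_{\cC/T}(d,(r^{(i)}_j))\to M_{\cC/T}(d,(1))\times_{N(d,(1))}N(d,(r^{(i)}_j))$, whose target is smooth over $T\times N(d,(r^{(i)}_j))$ because $M_{\cC/T}(d,(1))$ is an affine-space bundle over $\Pic^d_{\cC/T}\times N(d,(1))$, and proves that $\det$ itself is smooth. In the infinitesimal lifting problem for $\det$ the determinant $(L,\nabla_L)$ is prescribed over all of the artinian base $A$, so the obstruction lives in $\mathbf{H}^2(\mathcal{F}_0^{\bullet})\otimes I$ for the \emph{trace-free} complex $\mathcal{F}_0^{\bullet}$, and there your argument does close: by the dualities and Serre duality, $\mathbf{H}^2(\mathcal{F}_0^{\bullet})$ is dual to the space of horizontal trace-free parabolic endomorphisms, and stability makes any horizontal endomorphism a scalar, which trace-freeness then kills. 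To rescue your absolute version you would have to supply the missing compatibility of obstruction classes with the determinant and the unobstructedness of line bundles with connections with prescribed residues (this is where the relation $d+\sum_{i}\sum_{j}r^{(i)}_j\nu^{(i)}_j=0$ defining $N(d,(r^{(i)}_j))$ genuinely enters); the paper's relative formulation packages both steps at once.
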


\begin{proof}
Let $M_{\cC/T}(d,(1))$ be the moduli space of
pairs $(L,\nabla_L)$ of a line bundle $L$ on $\cC_x$
and a connection
$\nabla_L:L\rightarrow L\otimes
\Omega^1_{\cC/T}(\tilde{t}_1+\cdots+\tilde{t}_n)$.
Then $M_{\cC/T}(d,(1))$ is an affine space bundle over
$\Pic^d_{\cC/T}\times N(d,(1))$,
where
\[
 N(d,(1)):=\left\{ (\nu^{(i)})\in\mathbf{C}^n\left|
 d+\sum_{i=1}^n\nu^{(i)}=0 \right\}\right..
\]
Since $\Pic^d_{\cC/T}$ is smooth over $T$,
$M_{\cC/T}(d,(1))$ is smooth over $T\times N(d,(1))$.
Consider the morphism
\begin{gather*}
 \det\colon M^{\balpha}_{\cC/T}(d,(r^{(i)}_j))\longrightarrow
 M_{\cC/T}(d,(1))\times_{N(d,(1))}N(d,(r^{(i)}_j)); \\
 (E,\nabla,\{l^{(i)}_j\})\mapsto
 ((\det(E),\det(\nabla)),\pi(E,\nabla,\{l^{(i)}_j\}))
\end{gather*}

It is sufficient to show that the morphism $\det$ is smooth.
Let $A$ be an artinian local ring over
$M_{\cC/T}(d,(1))\times_{N(d,(1))}N(d,(r^{(i)}_j))$
with the maximal ideal $m$
and $I$ be an ideal of $A$ such that $mI=0$.
Let $(L,\nabla_L)\in M_{\cC/T}(d,(1))(A)$ and
$\bnu=(\nu^{(i)}_j)\in N(d,(r^{(i)}_j))(A)$
be the elements corresponding to the morphism
$\Spec A\rightarrow M_{\cC/T}(d,(1))\times_{N(d,(1))}N(d,(r^{(i)}_j))$.
Take any member
$(E,\nabla,\{l^{(i)}_j\})\in{\mathcal M}^{\balpha}_{\cC/T}(d,(r^{(i)}_j))(A/I)$
such that
$(\res_{\tilde{t}_i\times A/I}(\nabla)-\nu^{(i)}_j\mathrm{id})
(l^{(i)}_j)\subset l^{(i)}_{j+1}$ for any $i,j$.
and that
$\det(E,\nabla,\{l^{(i)}_j\})\cong ((L,\nabla_L),\bnu)\otimes A/I$.
It is sufficient to show that
$(E,\nabla,\{l^{(i)}_j\})$ can be lifted to a flat family
$(\tilde{E},\tilde{\nabla},\{\tilde{l}^{(i)}_j\})$ over $A$
such that $\det(\tilde{E},\tilde{\nabla},\{\tilde{l}^{(i)}_j\})
\cong((L,\nabla_L),\bnu)$.
We define a complex ${\mathcal F}_0^{\bullet}$ by
\begin{align*}
 {\mathcal F}_0^0&:=\left\{ a\in{\mathcal End}(E\otimes A/m)\left|
 \text{$\Tr(a)=0$ and
 $a|_{\tilde{t}_i\times A/m}((l^{(i)}_j)_{A/m})\subset (l^{(i)}_j)_{A/m}$ for any $i,j$}
 \right\}\right. \\
 {\mathcal F}_0^1&:=\left\{ b\in{\mathcal End}(E\otimes A/m)
 \otimes\Omega^1_{\cC/T}(\tilde{t}_1+\cdots+\tilde{t}_n) \left|
 \begin{array}{l}
 \text{$\Tr(b)=0$ and} \\
 \text{$\res_{\tilde{t}_i\otimes A/m}(b)((l^{(i)}_j)_{A/m})
 \subset (l^{(i)}_{j+1})_{A/m}$ for any $i,j$}
 \end{array}
 \right\}\right. \\
 \nabla^{\dag}&\colon {\mathcal F}_0^0\ni a\mapsto \nabla\circ a-a\circ\nabla
 \in{\mathcal F}_0^1.
\end{align*}
Let $\cC_A=\bigcup_{\alpha}U_{\alpha}$ be an affine open covering such that
$E|_{U_{\alpha}\otimes A/I}\cong{\mathcal O}_{U_{\alpha}\otimes A/I}^{\oplus r}$,
$\sharp\left\{(\tilde{t}_i)_A|(\tilde{t}_i)_A\in U_{\alpha}\right\}\leq 1$
 for any $\alpha$ and
$\sharp\left\{\alpha|(\tilde{t}_i)_A\in U_{\alpha}\right\}=1$ for any $i$.
Take a free ${\mathcal O}_{U_{\alpha}}$-module $E_{\alpha}$ of rank $r$
with isomorphisms
$\varphi_{\alpha}\colon\det(E_{\alpha})\stackrel{\sim}\rightarrow L|_{U_{\alpha}}$
and
$\phi_{\alpha}\colon E_{\alpha}\otimes A/I
\stackrel{\sim}\rightarrow E|_{U_{\alpha}\otimes A/I}$
such that
\[
 \varphi_{\alpha}\otimes A/I=\det(\phi_{\alpha})\colon
 \det(E_{\alpha})\stackrel{\sim}\longrightarrow
 \det(E)|_{U_{\alpha}\otimes A/I}=(L\otimes A/I)|_{U_{\alpha}\otimes A/I}.
\]
If $(\tilde{t}_i)_A\in U_{\alpha}$, we may assume that the parabolic structure
$\{l^{(i)}_j\}$ is given by
\[
 l^{(i)}_j=\langle e_1|_{(\tilde{t}_i)_{A/I}},\ldots,
 e_{r^{(i)}_j+\cdots+r^{(i)}_{s_i-1}}|_{(\tilde{t}_i)_{A/I}} \rangle,
\]
where $e_1,\ldots,e_r$ is the standard basis of $E_{\alpha}$.
We define a parabolic structure $\{(l_{\alpha})^{(i)}_j\}$ on $E_{\alpha}$ by
\[
 (l_{\alpha})^{(i)}_j:=\langle e_1|_{(\tilde{t}_i)_A},\ldots,
 e_{r^{(i)}_j+\cdots+r^{(i)}_{s_i-1}}|_{(\tilde{t}_i)_A} \rangle.
\]
The connection
$\phi_{\alpha}^{-1}\circ(\nabla|_{U_{\alpha}})\circ\phi_{\alpha}:
E_{\alpha}\otimes A/I\rightarrow E_{\alpha}\otimes
\Omega^1_{\cC/T}(\tilde{t}_1+\cdots+\tilde{t}_n)\otimes A/I$
is given by a connection matrix
$\overline{B}_{\alpha}\in H^0(E_{\alpha}^{\vee}\otimes E_{\alpha}\otimes
\Omega^1_{\cC/T}(\tilde{t}_1+\cdots+\tilde{t}_n)\otimes A/I)$.
Then we have
\[
 \res_{(\tilde{t}_i)_{A/I}}(\overline{B}_{\alpha})=
 \begin{pmatrix}
  (\nu^{(i)}_{s_i-1}\otimes A/I)I_{r^{(i)}_{s_i-1}} & * & \cdots & * \\
  0 & (\nu^{(i)}_{s_i-2}\otimes A/I)I_{r^{(i)}_{s_i-2}} & \cdots & * \\
  \vdots & \vdots & \ddots & \vdots \\
  0 & 0 & \cdots & (\nu^{(i)}_0\otimes A/I)I_{r^{(i)}_0}
 \end{pmatrix},
\]
where $I_{r^{(i)}_j}$ is the identity $r^{(i)}_j\times r^{(i)}_j$ matrix.
We can take a lift
$B_{\alpha}\in H^0(E_{\alpha}^{\vee}\otimes E_{\alpha}
\otimes\Omega^1_{\cC/T}(\tilde{t}_1+\cdots+\tilde{t}_n))$
of $\overline{B}_{\alpha}$ such that
\[
 \res_{(\tilde{t}_i)_A}(B_{\alpha})=
 \begin{pmatrix}
  \nu^{(i)}_{s_i-1}I_{r^{(i)}_{s_i-1}} & * & \cdots & * \\
  0 & \nu^{(i)}_{s_i-2}I_{r^{(i)}_{s_i-2}} & \cdots & * \\
  \vdots & \vdots & \ddots & \vdots \\
  0 & 0 & \cdots & \nu^{(i)}_0I_{r^{(i)}_0}
 \end{pmatrix}.
\]
and that
$\Tr(B_{\alpha})(e_1\wedge\cdots\wedge e_r)=
(\varphi_{\alpha}\otimes\mathrm{id})^{-1}
(\nabla_L|_{U_{\alpha}}(\varphi_{\alpha}(e_1\wedge\cdots\wedge e_r)))$.
Consider the connection
$\nabla_{\alpha}:E_{\alpha}\rightarrow
E_{\alpha}\otimes\Omega^1_{\cC/T}(\tilde{t}_1+\cdots+\tilde{t}_n)$
defined by
\[
 \nabla
 \begin{pmatrix}
  f_1 \\ \vdots \\ f_r
 \end{pmatrix}
 =
 \begin{pmatrix}
  df_1 \\ \vdots \\ df_r
 \end{pmatrix}
 +B_{\alpha}
 \begin{pmatrix}
  f_1 \\ \vdots \\ f_r
 \end{pmatrix}
 \hspace{50pt}
 \left(
 \begin{pmatrix}
  f_1 \\ \vdots \\ f_r
 \end{pmatrix}
 \in E_{\alpha}\right).
\]
Then we obtain a local parabolic connection
$(E_{\alpha},\nabla_{\alpha},\{(l_{\alpha})^{(i)}_j\})$
on $U_{\alpha}$.
If $(\tilde{t}_i)_A\notin U_{\alpha}$ for any $i$,
then we can easily obtain a local parabolic connection
$(E_{\alpha},\nabla_{\alpha},\{(l_{\alpha})^{(i)}_j\})$ on $U_{\alpha}$
(in this case, a parabolic structure $\{(l_{\alpha})^{(i)}_j\}$
is nothing).
We put $U_{\alpha\beta}:=U_{\alpha}\cap U_{\beta}$
and $U_{\alpha\beta\gamma}:=
U_{\alpha}\cap U_{\beta}\cap U_{\gamma}$.
Take an isomorphism
\[
 \theta_{\beta\alpha}\colon E_{\alpha}|_{U_{\alpha\beta}}
 \stackrel{\sim}\longrightarrow E_{\beta}|_{U_{\alpha\beta}}
\]
such that
$\theta_{\beta\alpha}\otimes A/I=\phi_{\beta}^{-1}\circ\phi_{\alpha}$
and that $\varphi_{\beta}\circ\det(\theta_{\beta\alpha})=\varphi_{\alpha}$.
We put
\[
 u_{\alpha\beta\gamma}:=\phi_{\alpha}\circ
 \left(\theta_{\gamma\alpha}^{-1}|_{U_{\alpha\beta\gamma}}\circ
 \theta_{\gamma\beta}|_{U_{\alpha\beta\gamma}}\circ
 \theta_{\beta\alpha}|_{U_{\alpha\beta\gamma}}
 -\mathrm{id}_{E_{\alpha}|_{U_{\alpha\beta\gamma}}}
 \right)\circ\phi_{\alpha}^{-1}
\]
and
\[
 v_{\alpha\beta}:=\phi_{\alpha}\circ\left(
 \nabla_{\alpha}|_{U_{\alpha\beta}}
 -\theta_{\beta\alpha}^{-1}\circ
 \nabla_{\beta}|_{U_{\alpha\beta}}
 \circ\theta_{\beta\alpha}
 \right)\circ\phi_{\alpha}^{-1}.
\]
Then we have
$\{u_{\alpha\beta\gamma}\}\in
C^2(\{U_{\alpha}\},{\mathcal F}_0^0\otimes I)$
and
$\{v_{\alpha\beta}\}\in C^1(\{U_{\alpha}\},{\mathcal F}_0^1\otimes I)$.
We can easily see that
\[
 d\{u_{\alpha\beta\gamma}\}=0 \quad \text{and}
 \quad \nabla^{\dag}\{u_{\alpha\beta\gamma}\}=-d\{v_{\alpha\beta}\}.
\]
So we can define an element
\[
 \omega(E,\nabla,\{l^{(i)}_j\}):=[\{u_{\alpha\beta\gamma}\},\{v_{\alpha\beta}\}]
 \in \mathbf{H}^2({\mathcal F}_0^{\bullet})\otimes I.
\]
We can check that $\omega(E,\nabla,\{l^{(i)}_j\})=0$ if and only if
$(E,\nabla,\{l^{(i)}_j\})$ can be lifted to a flat family
$(\tilde{E},\tilde{\nabla},\{\tilde{l}^{(i)}_j\})$ over $A$
such that
$\det(\tilde{E},\tilde{\nabla},\{\tilde{l}^{(i)}_j\})
\cong ((L,\nabla_L),\bnu)$.
{F}rom the spectral sequence
$H^q({\mathcal F}_0^p)\Rightarrow\mathbf{H}^{p+q}({\mathcal F}_0^{\bullet})$,
there is an isomorphism
\[
 \mathbf{H}^2({\mathcal F}_0^{\bullet})\cong
 \coker\left(
 H^1({\mathcal F}_0^0)\xrightarrow[]{H^1(\nabla^{\dag})}
 H^1({\mathcal F}_0^1)
 \right).
\]
Since
$({\mathcal F}_0^0)^{\vee}\otimes\Omega^1_{\cC/T}
\cong{\mathcal F}_0^1$ and
$({\mathcal F}_0^1)^{\vee}\otimes\Omega^1_{\cC/T}
\cong{\mathcal F}_0^0$, we have
\begin{align*}
 \mathbf{H}^2({\mathcal F}_0^{\bullet})&\cong
 \coker\left( H^1({\mathcal F}_0^0)\xrightarrow[]{H^1(\nabla^{\dag})}
 H^1({\mathcal F}_0^1)\right) \\
 &\cong\ker\left( H^1({\mathcal F}_0^1)^{\vee}
 \xrightarrow[]{H^1(\nabla^{\dag})}
 H^1({\mathcal F}_0^0)^{\vee}\right)^{\vee} \\
 &\cong\ker\left( H^0(({\mathcal F}_0^1)^{\vee}\otimes\Omega^1_{\cC/T})
 \xrightarrow[]{-H^0(\nabla^{\dag})}
 H^0(({\mathcal F}_0^0)^{\vee}\otimes\Omega^1_{\cC/T})\right)^{\vee} \\
 &\cong\ker\left( H^0({\mathcal F}_0^0)
 \xrightarrow[]{-H^0(\nabla^{\dag})}
 H^0({\mathcal F}_0^1)\right)^{\vee}.
\end{align*}
Take any element
$a\in\ker\left(H^0({\mathcal F}_0^0)
\xrightarrow[]{-H^0(\nabla^{\dag})}
H^0({\mathcal F}_0^1)\right)$.
Then we have
$a\in\End((E,\nabla,\{l^{(i)}_j\})\otimes A/m)$.
Since $(E,\nabla,\{l^{(i)}_j\})\otimes A/m$ is $\balpha$-stable,
we have $a=c\cdot\mathrm{id}_{E\otimes A/m}$ for some $c\in A/m$.
So we have $a=0$, because $\Tr(a)=0$.
Thus we have
$\ker\left(H^0({\mathcal F}_0^0)\xrightarrow[]{-H^0(\nabla^{\dag})}
H^0({\mathcal F}_0^1)\right)=0$
and so we have
$\mathbf{H}^2({\mathcal F}^{\bullet})=0$.
In particular, we have
$\omega(E,\nabla,\{l^{(i)}_j\})=0$.
Thus $(E,\nabla,\{l^{(i)}_j\})$ can be lifted to a flat family
$(\tilde{E},\tilde{\nabla},\{\tilde{l}^{(i)}_j\})$ over $A$
such that
$(\tilde{E},\tilde{\nabla},\{\tilde{l}^{(i)}_j\})\otimes A/I
\cong(E,\nabla,\{l^{(i)}_j\})$
and that
$\det(\tilde{E},\tilde{\nabla},\{\tilde{l}^{(i)}_j\})=
((L,\nabla_L),\bnu)$.
Hence $\det$ is a smooth morphism.
\end{proof}

\begin{Theorem}\label{thm:dim}
For any $(x,\bnu)\in T\times N(d,(r^{(i)}_j))$,
the fiber
$M^{\balpha}_{\cC/T}(d,(r^{(i)}_j))_{(x,\bnu)}:=
\pi^{-1}(x,\bnu)$
is of equidimension
$2r^2(g-1)+2+2\sum_{i=1}^n\sum_{j=0}^{s_i-1}\sum_{j'>j}r^{(i)}_jr^{(i)}_{j'}
=2r^2(g-1)+2+nr(r-1)-\sum_{i=1}^n\sum_{j=0}^{s_i-1}r^{(i)}_j(r^{(i)}_j-1)$
if $M^{\balpha}_{\cC/T}(d,(r^{(i)}_j))_{(x,\bnu)}\neq\emptyset$.
\end{Theorem}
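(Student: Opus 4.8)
The plan is to run the same deformation-theoretic argument as in the proof of Theorem~\ref{thm:smooth}, but now on the \emph{fixed} curve $\cC_x$ and with the local exponents $\bnu$ fixed as well, and then to read off the dimension from an Euler characteristic. Since the curve $\cC_x$, the points $t_1,\dots,t_n$ and the eigenvalues $\bnu=(\nu^{(i)}_j)$ are all held fixed, the relevant deformation complex is the non-trace-free analogue of $\cF_0^\bullet$, namely the two-term complex $\cF^\bullet=[\cF^0\xrightarrow{\nabla^\dagger}\cF^1]$ on $\cC_x$ with
\[
 \cF^0:=\{a\in\mathcal{End}(E)\mid a|_{t_i}(l^{(i)}_j)\subset l^{(i)}_j\ \text{for all }i,j\},
\]
\[
 \cF^1:=\{b\in\mathcal{End}(E)\otimes\Omega^1_{\cC_x}(t_1+\cdots+t_n)\mid \res_{t_i}(b)(l^{(i)}_j)\subset l^{(i)}_{j+1}\ \text{for all }i,j\},
\]
and $\nabla^\dagger(a)=\nabla\circ a-a\circ\nabla$. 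By the same local gluing (cocycle) computation used for $\det$ above, the tangent space to the fiber $M^{\balpha}_{\cC/T}(d,(r^{(i)}_j))_{(x,\bnu)}$ at a point $(E,\nabla,\{l^{(i)}_j\})$ is canonically $\mathbf{H}^1(\cF^\bullet)$, with obstruction space $\mathbf{H}^2(\cF^\bullet)$. Because the total moduli space is smooth over $T\times N(d,(r^{(i)}_j))$ by Theorem~\ref{thm:smooth}, each nonempty fiber is smooth, so its dimension at every point equals $\dim_{\mathbf C}\mathbf{H}^1(\cF^\bullet)$; it therefore suffices to compute this number, and its constancy will give equidimensionality.

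Next I would pin down the two outer terms. The space $\mathbf{H}^0(\cF^\bullet)$ consists of the global endomorphisms of $E$ preserving each filtration and commuting with $\nabla$, i.e.\ the endomorphisms of the parabolic connection $(E,\nabla,\{l^{(i)}_j\})$; by $\balpha$-stability this object is simple, so $\mathbf{H}^0(\cF^\bullet)=\mathbf{C}\cdot\mathrm{id}$ and $\dim\mathbf{H}^0(\cF^\bullet)=1$. For $\mathbf{H}^2$ I would invoke the self-duality of the complex already used in Theorem~\ref{thm:smooth}: under the trace pairing one has $(\cF^0)^\vee\otimes\Omega^1_{\cC_x}\cong\cF^1$ and $(\cF^1)^\vee\otimes\Omega^1_{\cC_x}\cong\cF^0$, because at each $t_i$ the filtration-preserving endomorphisms and the strongly parabolic (strictly filtration-lowering) endomorphisms are exact annihilators of one another under $\Tr(ab)$. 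Serre duality for the two-term complex then gives $\mathbf{H}^2(\cF^\bullet)\cong\mathbf{H}^0(\cF^\bullet)^\vee$, so $\dim\mathbf{H}^2(\cF^\bullet)=1$. Combining with the hypercohomology Euler characteristic $\dim\mathbf{H}^0-\dim\mathbf{H}^1+\dim\mathbf{H}^2=\chi(\cF^\bullet)$ yields
\[
 \dim\mathbf{H}^1(\cF^\bullet)=2-\chi(\cF^\bullet).
\]

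It then remains to evaluate $\chi(\cF^\bullet)=\chi(\cF^0)-\chi(\cF^1)$ by Riemann--Roch. Here $\cF^0$ agrees with $\mathcal{End}(E)$ away from the $t_i$, and at $t_i$ the quotient $\mathcal{End}(E)/\cF^0$ has length equal to the codimension of the filtration-preserving endomorphisms inside $\mathcal{End}(E|_{t_i})$, which is $\sum_{j<j'}r^{(i)}_jr^{(i)}_{j'}$; thus $\chi(\cF^0)=\chi(\mathcal{End}(E))-\sum_{i}\sum_{j<j'}r^{(i)}_jr^{(i)}_{j'}$. Dually, $\mathcal{End}(E)\otimes\Omega^1_{\cC_x}\subset\cF^1$ with $\cF^1/(\mathcal{End}(E)\otimes\Omega^1_{\cC_x})$ of length $\sum_{j<j'}r^{(i)}_jr^{(i)}_{j'}$ at $t_i$ (the dimension of the strongly parabolic residues), so $\chi(\cF^1)=\chi(\mathcal{End}(E)\otimes\Omega^1_{\cC_x})+\sum_{i}\sum_{j<j'}r^{(i)}_jr^{(i)}_{j'}$. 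Using $\chi(\mathcal{End}(E))=r^2(1-g)$ and $\chi(\mathcal{End}(E)\otimes\Omega^1_{\cC_x})=r^2(g-1)$ (both by Riemann--Roch, since $\mathcal{End}(E)$ has rank $r^2$ and degree $0$), one gets $\chi(\cF^\bullet)=2r^2(1-g)-2\sum_{i}\sum_{j<j'}r^{(i)}_jr^{(i)}_{j'}$, whence
\[
 \dim\mathbf{H}^1(\cF^\bullet)=2r^2(g-1)+2+2\sum_{i=1}^n\sum_{j=0}^{s_i-1}\sum_{j'>j}r^{(i)}_jr^{(i)}_{j'},
\]
which is the asserted formula; the second expression follows from the elementary identity $2\sum_{j'>j}r^{(i)}_jr^{(i)}_{j'}=r(r-1)-\sum_{j}r^{(i)}_j(r^{(i)}_j-1)$.

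The conceptual steps are all borrowed from Theorem~\ref{thm:smooth}, so the main work — and the only place that genuinely needs care — is the local bookkeeping at the points $t_i$: identifying the precise length of the skyscraper quotients $\mathcal{End}(E)/\cF^0$ and $\cF^1/(\mathcal{End}(E)\otimes\Omega^1_{\cC_x})$, and checking that the filtration-preserving and strongly parabolic subspaces of $\mathcal{End}(E|_{t_i})$ are honest annihilators under the trace form (so that the self-duality, hence $\dim\mathbf{H}^2=1$, holds). Once these local counts are correct, the Riemann--Roch computation and the cancellation $2-\chi$ are routine.
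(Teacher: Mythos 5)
Your proposal is correct and follows essentially the same route as the paper's proof: smoothness (Theorem \ref{thm:smooth}) reduces the claim to computing $\dim\mathbf{H}^1({\mathcal F}^{\bullet})$ for the same deformation complex, the outer hypercohomologies $\mathbf{H}^0$ and $\mathbf{H}^2$ are identified with $\mathbf{C}$ via $\balpha$-stability and Serre duality, and the dimension falls out of an Euler-characteristic count. The only difference is bookkeeping: the paper converts $\chi({\mathcal F}^1)$ into $-\chi({\mathcal F}^0)$ through the duality ${\mathcal F}^1\cong({\mathcal F}^0)^{\vee}\otimes\Omega^1_{\cC_x}$ and computes $\chi({\mathcal F}^0)$ by an inductive chain of subsheaves ${\mathcal E}_k$, whereas you evaluate $\chi({\mathcal F}^0)$ and $\chi({\mathcal F}^1)$ separately by counting the lengths of the skyscraper quotients, which is the same computation in substance.
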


\begin{proof}
Since $M^{\balpha}_{\cC/T}(d,(r^{(i)}_j))_{(x,\bnu)}$
is smooth, it is sufficient to show that
the tangent space
$\Theta_{M^{\balpha}_{\cC/T}(d,(r^{(i)}_j))_{(x,\bnu)}}(y)$
of $M^{\balpha}_{\cC/T}(d,(r^{(i)}_j))_{(x,\bnu)}$
at any point $y=(E,\nabla,\{l^{(i)}_j\})\in
M^{\balpha}_{\cC/T}(d,(r^{(i)}_j))_{(x,\bnu)}$
is of dimension
\[
 2r^2(g-1)+2+2\sum_{i=1}^n\sum_{j=1}^{s_i-1}\sum_{j'>j}r^{(i)}_jr^{(i)}_{j'}.
\]
Set
\begin{align*}
 {\mathcal F}^0&:=
 \left\{ a\in{\mathcal End}(E) \left|
 \text{$a|_{(\tilde{t}_i)_x}(l^{(i)}_j)\subset l^{(i)}_j$ for any $i,j$}
 \right.\right\} \\
 {\mathcal F}^1&:=
 \left\{ b\in{\mathcal End}(E)\otimes
 \Omega^1_{\cC/T}(\tilde{t}_1+\cdots+\tilde{t}_n)
 \left| \text{$\res_{(\tilde{t}_i)_x}(b)(l^{(i)}_j)\subset l^{(i)}_{j+1}$
 for any $i,j$} \right.\right\} \\
 \nabla^{\dag}&\colon{\mathcal F}^0\ni
 a\mapsto \nabla\circ a-a\circ\nabla\in {\mathcal F}^1
\end{align*}
Note that we have an isomorphism
\[
 \Theta_{M^{\balpha}_{\cC/T}(d,(r^{(i)}_j))/T\times N(d,(r^{(i)}_j))}(y)
 \cong \mathbf{H}^1({\mathcal F}^{\bullet}),
\]
where $\Theta_{M^{\balpha}_{\cC/T}(d,(r^{(i)}_j))/T\times N(d,(r^{(i)}_j))}$
is the algebraic relative tangent bundle of $M^{\balpha}_{\cC/T}(d,(r^{(i)}_j))$
over $T\times N(d,(r^{(i)}_j))$.
{F}rom the spectral sequence
$H^q({\mathcal F}^p)\Rightarrow \mathbf{H}^{p+q}({\mathcal F}^{\bullet})$,
we obtain an exact sequence
\[
 0\longrightarrow\mathbf{C}\longrightarrow H^0({\mathcal F}^0)
 \longrightarrow H^0({\mathcal F}^1) \longrightarrow
 \mathbf{H}^1({\mathcal F}^{\bullet}) \longrightarrow
 H^1({\mathcal F}^0)\longrightarrow H^1({\mathcal F}^1)
 \longrightarrow \mathbf{C}\longrightarrow 0.
\]
So we have
\begin{align*}
 \dim\mathbf{H}^1({\mathcal F}^{\bullet})&=
 \dim H^0({\mathcal F}^1)+\dim H^1({\mathcal F}^0)
 -\dim H^0({\mathcal F}^0)-\dim H^1({\mathcal F}^1)
 +2\dim_{\mathbf{C}}\mathbf{C} \\
 &=\dim H^0(({\mathcal F}^0)^{\vee}\otimes\Omega^1_{\cC/T})
 +\dim H^1({\mathcal F}^0)-\dim H^0({\mathcal F}^0)
 -\dim H^1(({\mathcal F}^0)^{\vee}\otimes\Omega^1_{\cC/T})+2 \\
 &=\dim H^1({\mathcal F}^0)^{\vee}+\dim H^1({\mathcal F}^0)
 -\dim H^0({\mathcal F}^0)-\dim H^0({\mathcal F}^0)^{\vee}+2 \\
 &=2-2\chi({\mathcal F}^0).
\end{align*}
Here we used the isomorphism
${\mathcal F}^1\cong({\mathcal F}^0)^{\vee}\otimes\Omega^1_{\cC/T}$
and Serre duality.
We define a subsheaf ${\mathcal E}_1\subset{\mathcal End}(E)$
by the exact sequence
\[
 0 \longrightarrow {\mathcal E}_1 \longrightarrow
 {\mathcal End}(E) \longrightarrow
 \bigoplus_{i=1}^n\Hom(l^{(i)}_1,l^{(i)}_0/l^{(i)}_1)
 \longrightarrow 0.
\]
Inductively we define a subsheaf ${\mathcal E}_k\subset{\mathcal End}(E)$
by the exact sequence
\[
 0\longrightarrow {\mathcal E}_k \longrightarrow
 {\mathcal E}_{k-1} \longrightarrow
 \bigoplus_{i=1}^n\Hom(l^{(i)}_k,l^{(i)}_{k-1}/l^{(i)}_k)
 \longrightarrow 0.
\]
Then we have ${\mathcal E}_{\max_{i}\{s_i-1\}}={\mathcal F}^0$ and
\begin{align*}
 \chi({\mathcal F}^0)
 &=\chi({\mathcal End}(E))-\sum_{i=1}^n\sum_{j=1}^{s_i-1}
 \dim\Hom(l^{(i)}_j,l^{(i)}_{j-1}/l^{(i)}_j) \\
 &=r^2(1-g)-\sum_{i=1}^n\sum_{j=1}^{s_i-1}\sum_{j'>j-1}r^{(i)}_{j-1}r^{(i)}_{j'}
\end{align*}
So we have
\[
 \dim\mathbf{H}^1({\mathcal F}^{\bullet})=
 2-\chi({\mathcal F}^0)
 =2r^2(g-1)+2+2\sum_{i=1}^n\sum_{j=0}^{s_i-1}\sum_{j'>j}r^{(i)}_jr^{(i)}_{j'}.
\]
\end{proof}

\section{Riemann-Hilbert correspondence}\label{riemann-hilbert}

Let $T$, ${\mathcal C}$ and $\tilde{\bt}=(\tilde{t}_1,\ldots,\tilde{t}_n)$
be as in section 1.
Take a point $x\in T$.
Then ${\mathcal C}_x$ is a smooth projective curve of genus $g$
over $\mathbf{C}$ and
$(\tilde{t}_1)_x,\ldots,(\tilde{t}_n)_x$ are distinct points of ${\mathcal C}_x$.
Consider the categorical quotient
\[
 \mathrm{RP}_r({\mathcal C}_x,\tilde{\bt}_x):=
 \Hom\left(\pi_1(\cC_x\setminus\{(\tilde{t}_1)_x,\ldots,(\tilde{t}_n)_x\},*),
 GL_r(\mathbf{C})\right)//GL_r(\mathbf{C})
\]
by the adjoint action.
We set
\[
 B:=\left\{ \mathbf{b}:=(b^{(i)}_j)^{1\leq i\leq n}_{0\leq j\leq s_i-1}\left|
 \prod_{i=1}^n\prod_{j=0}^{s_i-1}(b^{(i)}_j)^{r^{(i)}_j}=1\right.\right\}.
\]
For $\mathbf{b}\in B$ and $x\in T$,
we denote by $\mathrm{RP}_r(\cC_x,\tilde{\bt}_x,\mathbf{b})$ the categorical quotient of
\[
 \left\{ \rho\in\Hom\left(\pi_1(\cC_x\setminus\{(\tilde{t}_1)_x,\ldots,(\tilde{t}_n)_x,*),
 GL_r(\mathbf{C})\right)\left|
 \begin{array}{l}
 \text{for each $i$, there is a filtration} \\
 \mathbf{C}^r=W^{(i)}_0\supset W^{(i)}_1\supset\cdots\supset
 W^{(i)}_{s_i-1}\supset W^{(i)}_{s_i}=0 \\
 \text{such that $(\rho(\gamma_i)-b^{(i)}_j\mathrm{id})(W^{(i)}_j)
 \subset W^{(i)}_{j+1}$ for any $i,j$}
 \end{array}
 \right\}\right.
\]
by the adjoint action of $GL_r(\mathbf{C})$,
where $\gamma_i$ is a loop around $(\tilde{t}_i)_x$.
Then we have a canonical closed immersion
\[
 \mathrm{RP}_r(\cC_x,\tilde{\bt}_x,\mathbf{b})\hookrightarrow
 \mathrm{RP}_r(\cC_x,\tilde{\bt}_x).
\]

For $\bnu\in N(d,(r^{(i)}_j))$,
consider the moduli space
$M^{\balpha}_{\cC/T}(d,(r^{(i)}_j))_{(x,\bnu)}$.
We define $\mathbf{b}=(b^{(i)}_j)=rh(\bnu)$ by
\[
 b^{(i)}_j=\exp(-2\pi\sqrt{-1}\nu^{(i)}_j)
\]
for any $i,j$.
For $(E,\nabla,\{l^{(i)}_j\})\in M^{\balpha}_{\cC/T}(d,(r^{(i)}_j))_{(x,\bnu)}$,
$\ker\nabla^{an}
|_{\cC_x\setminus\{(\tilde{t}_1)_x,\ldots,(\tilde{t}_n)_x\}}$
becomes a local system and corresponds to a representation
$\rho:\pi_1(\cC_x\setminus\{(\tilde{t}_1)_x,\ldots,(\tilde{t}_n)_x\},*)
\rightarrow GL_r(\mathbf{C})$.
Then we put
$\RH(E,\nabla,\{l^{(i)}_j\}):=[\rho]\in\mathrm{RP}_r(\cC_x,\tilde{\bt}_x,\mathbf{b})$.
So we can define a morphism
\[
 \RH\colon M^{\balpha}_{\cC/T}(d,(r^{(i)}_j))_{(x,\bnu)}\longrightarrow
 \mathrm{RP}_r(\cC_x,\tilde{\bt}_x,\mathbf{b}).
\]
Consider the scheme
\[
 p\colon\tilde{M}^{\balpha}_{\cC/T}(d,(r^{(i)}_j))\longrightarrow
 M^{\balpha}_{\cC/T}(d,(r^{(i)}_j))
\]
such that for an affine scheme
$U$ over ${\mathcal M}^{\balpha}_{\cC/T}(d,(r^{(i)}_j))$,
\[
 \tilde{M}^{\balpha}_{\cC/T}(d,(r^{(i)}_j))(U)=
 \left\{ (V^{(i)}_{j,k}) \left|
 \begin{array}{l}
 l^{(i)}_j/l^{(i)}_{j+1}=V^{(i)}_{j,0}\supset V^{(i)}_{j,1}\supset
 \cdots\supset V^{(i)}_{j,r^{(i)}_j-1}\supset V^{(i)}_{j,r^{(i)}_j}=0 \\
 \text{is a filtration such that $V^{(i)}_{j,k}/V^{(i)}_{j,k+1}$
 is a line bundle on $\tilde{t}_i\times U$}
 \end{array}
 \right.\right\},
\]
where
${\mathcal M}^{\balpha}_{\cC/T}(d,(r^{(i)}_j))$ is the moduli functor
of $\balpha$-stable regular singular parabolic connections
of spectral type $(r^{(i)}_j)$ and
$(E,\nabla,\{l^{(i)}_j\})$ is the member corresponding to
$U\rightarrow {\mathcal M}^{\balpha}_{\cC/T}(d,(r^{(i)}_j))$.
Then $\tilde{M}^{\balpha}_{\cC/T}(d,(r^{(i)}_j))$
is a flag scheme over 
$M^{\balpha}_{\cC/T}(d,(r^{(i)}_j))$
and so $p$ is a smooth projective surjective morphism.
A point of $\tilde{M}^{\balpha}_{\cC/T}(d,(r^{(i)}_j))$
corresponds to a regular singular parabolic connection
considered in \cite{Inaba-1}.
Assume that we can choose $\balpha$ so that
$\balpha$-stable $\Leftrightarrow$ $\balpha$-semistable.
If we choose $\balpha'=((\alpha')^{(i)}_k)^{1\leq i\leq n}_{1\leq k\leq r}$
suitably, any parabolic connection $(E,\nabla,\{l^{(i)}_j\},\{V^{(i)}_{j,k}\})$
in $\tilde{M}^{\balpha}_{\cC/T}(d,(r^{(i)}_j))$
is automatically $\balpha'$-stable.
So we can define an inclusion
\[
 \iota\colon\tilde{M}^{\balpha}_{\cC/T}(d,(r^{(i)}_j))
 \hookrightarrow
 M^{\balpha'}_{\cC/T}(\tilde{\bt},r,d),
\]
where $M^{\balpha'}_{\cC/T}(\tilde{\bt},r,d)$
is the moduli space of $\balpha'$-stable regular singular parabolic connections
defined in [\cite{Inaba-1},Theorem 2.1].
If we take $\balpha'$ suitably, $\iota$ becomes a closed immersion.

For $\bnu=(\nu^{(i)}_j)\in N(d,(r^{(i)}_j))$,
we define $\bnu'=((\nu')^{(i)}_q)^{1\leq i\leq n}_{0\leq q\leq r-1}$
by $(\nu')^{(i)}_q=\nu^{(i)}_j$
if $q=m+\sum_{j'<j} r^{(i)}_{j'}$ with $0\leq m\leq r^{(i)}_j-1$. 
Now assume that $rn-2r-2>0$ if $g=0$,
$n>1$ if $g=1$ and $n\geq 1$ if $g=2$.
Since the Riemann-Hilbert morphism
\[
 \RH\colon M^{\balpha'}_{\cC/T}(\tilde{\bt},r,d)_{(x,\bnu')}\longrightarrow
 RP_r(\cC,\tilde{\bt})_{rh(\bnu')}
\]
is a proper surjective morphism by \cite{Inaba-1},
the restriction
\[
 \RH|_{\tilde{M}^{\balpha}_{\cC/T}(d,(r^{(i)}_j))_{(x,\bnu)}}\colon
 \tilde{M}^{\balpha}_{\cC/T}(d,(r^{(i)}_j))_{(x,\bnu)}
 \longrightarrow
 \mathrm{RP}_r(\cC_x,\tilde{\bt}_x,\mathbf{b})
\]
is also proper.
We have a commutative diagram
\[
 \begin{array}{rcl}
   \tilde{M}^{\balpha}_{\cC/T}(d,(r^{(i)}_j))_{(x,\bnu)}
   & \stackrel{\scriptstyle{p}}\longrightarrow
   & M^{\balpha}_{\cC/T}(d,(r^{(i)}_j))_{(x,\bnu)} \\
   \scriptstyle{\RH|_{\tilde{M}^{\balpha}_{\cC/T}(d,(r^{(i)}_j))_{(x,\bnu)}}}
   \searrow & & \swarrow \scriptstyle{\RH} \\
   & \mathrm{RP}_r(\cC_x,\tilde{\bt}_x,\mathbf{b}). &
 \end{array}
\]
Since $p$ is surjective, the morphism
\[
 \RH\colon M^{\balpha}_{\cC/T}(d,(r^{(i)}_j))_{(x,\bnu)}
 \longrightarrow \mathrm{RP}_r(\cC_x,\tilde{\bt}_x,\mathbf{b})
\]
becomes a proper morphism.

\begin{Remark}\rm
D.~Yamakawa gives in \cite{Yamakawa}, 4.3, 4.4,  the Riemann-Hilbert isomorphism
from the moduli space $M^{\balpha}_{\cC/T}(d,(r^{(i)}_j))_{(x,\bnu)}$
to the moduli space of stable filtered local systems which is constructed as
a quiver variety.
The properness of the morphism 
$\RH \colon M^{\balpha}_{\cC/T}(d,(r^{(i)}_j))_{(x,\bnu)}
 \longrightarrow \mathrm{RP}_r(\cC_x,\tilde{\bt}_x,\mathbf{b})$
can be obtained also from this Yamakawa's precise result.
\end{Remark}

\begin{Remark}\rm
It is somewhat a complicated problem whether the morphism
$\RH:M^{\balpha}_{\cC/T}(d,(r^{(i)}_j))_{(x,\bnu)}
\rightarrow \mathrm{RP}_r(\cC_x,\tilde{\bt}_x,\mathbf{b})$
defined above is surjective.
For example, it happens that for $g=0$ and for small $n$, the moduli space
$M^{\balpha}_{\cC/T}(d,(r^{(i)}_j))_{(x,\bnu)}$
becomes empty but the moduli space
$\mathrm{RP}_r(\cC_x,\tilde{\bt}_x,\mathbf{b})$
is not empty.
\end{Remark}

\section{Relative symplectic form on the moduli space}

\begin{Theorem}\label{thm:sym}
 Assume that we can take $\balpha$ so that
 $\balpha$-stable $\Leftrightarrow$ $\balpha$-semistable.
 Then there exists a relative symplectic form
 $\omega\in H^0\left(M^{\balpha}_{\cC/T}(d,(r^{(i)}_j)),
 \Omega^2_{M^{\balpha}_{\cC/T}(d,(r^{(i)}_j))/T\times N(d,(r^{(i)}_j))}\right)$.
\end{Theorem}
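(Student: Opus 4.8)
The plan is to realise $\omega$ as a cup-product pairing on the hypercohomology description of the relative tangent bundle, and then to establish non-degeneracy and closedness separately. Recall from the proof of Theorem \ref{thm:dim} that at a point $y=(E,\nabla,\{l^{(i)}_j\})$ the relative tangent space is canonically identified with $\mathbf{H}^1({\mathcal F}^{\bullet})$, where ${\mathcal F}^{\bullet}=[{\mathcal F}^0\xrightarrow{\nabla^{\dag}}{\mathcal F}^1]$ is the two-term complex defined there. First I would introduce a pairing of complexes ${\mathcal F}^{\bullet}\otimes{\mathcal F}^{\bullet}\to\Omega^1_{\cC/T}[-1]$ given on components by $a\otimes b\mapsto\Tr(ab)$ for $a\in{\mathcal F}^0,\ b\in{\mathcal F}^1$ and by $b\otimes a\mapsto-\Tr(ba)$ for $b\in{\mathcal F}^1,\ a\in{\mathcal F}^0$. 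The point that makes this a morphism of complexes is the Leibniz-type identity $\Tr((\nabla^{\dag}a)a')+\Tr(a(\nabla^{\dag}a'))=d\,\Tr(aa')$, which follows because the trace of a commutator with $\nabla$ is the exterior derivative of the trace; this is exactly the cocycle condition needed for the pairing to pass to hypercohomology.

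Next I would check that the output $\Tr(ab)$ is a pole-free section of $\Omega^1_{\cC/T}$, even though $b$ is valued in the twisted sheaf $\Omega^1_{\cC/T}(\tilde{t}_1+\cdots+\tilde{t}_n)$: the residue of $\Tr(ab)$ at $(\tilde{t}_i)_x$ equals $\Tr\bigl(a|_{(\tilde{t}_i)_x}\,\res_{(\tilde{t}_i)_x}(b)\bigr)$, and since $a$ preserves each $l^{(i)}_j$ while $\res(b)$ carries $l^{(i)}_j$ into $l^{(i)}_{j+1}$, the composite is strictly filtration-decreasing, hence nilpotent and traceless. With this, cup product followed by the trace isomorphism $H^1(\Omega^1_{\cC/T})\cong{\mathcal O}_{T\times N(d,(r^{(i)}_j))}$ produces an alternating bilinear form $\omega\colon\mathbf{H}^1({\mathcal F}^{\bullet})\otimes\mathbf{H}^1({\mathcal F}^{\bullet})\to{\mathcal O}$, i.e.\ a relative $2$-form in the required space $H^0\bigl(M^{\balpha}_{\cC/T}(d,(r^{(i)}_j)),\Omega^2_{M^{\balpha}_{\cC/T}(d,(r^{(i)}_j))/T\times N(d,(r^{(i)}_j))}\bigr)$.

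Non-degeneracy I expect to be formal. The component pairing is perfect: it is precisely the duality ${\mathcal F}^1\cong({\mathcal F}^0)^{\vee}\otimes\Omega^1_{\cC/T}$ already used in Theorem \ref{thm:dim}, so ${\mathcal F}^{\bullet}$ is self-dual with respect to Serre duality (isomorphic to its $\Omega^1_{\cC/T}[-1]$-twisted dual complex). Serre duality for the hypercohomology of this complex then makes the induced pairing on $\mathbf{H}^1({\mathcal F}^{\bullet})$ perfect at every point, so $\omega$ is fibrewise non-degenerate; smoothness and equidimensionality from Theorems \ref{thm:smooth} and \ref{thm:dim} ensure the rank is constant over $T\times N(d,(r^{(i)}_j))$.

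The hard part will be closedness, $d\omega=0$, since this is not visible from the pointwise pairing alone. I see two routes. The direct one is to compute $\omega$ via an explicit \v{C}ech representative on an affine cover trivialising $E$: a first-order deformation is a cocycle $(\{a_{\alpha}\},\{b_{\alpha\beta}\})$, the cup product expresses $\omega$ as an alternating sum of traces of products, and one then verifies closedness by exhibiting a local Liouville-type primitive whose differential recovers $\omega$. The likely more efficient route is to deduce closedness by compatibility with the symplectic form already constructed on the larger moduli space $M^{\balpha'}_{\cC/T}(\tilde{\bt},r,d)$ in \cite{Inaba-1} (built on \cite{IIS-1}): using the flag bundle $p\colon\tilde{M}^{\balpha}_{\cC/T}(d,(r^{(i)}_j))\to M^{\balpha}_{\cC/T}(d,(r^{(i)}_j))$ and the closed immersion $\iota$ of Section \ref{riemann-hilbert}, one pulls back the known closed form, shows it is horizontal for $p$ and descends to $\omega$, whence $d\omega=0$. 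In either approach the real technical obstacle is controlling the residual contributions at the marked points $(\tilde{t}_i)_x$, where the complex ${\mathcal F}^{\bullet}$ of the spectral-type moduli and that governing the full moduli space differ in their parabolic conditions.
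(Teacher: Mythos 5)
Your proposal matches the paper's proof in all essentials: the paper defines exactly this trace/cup-product pairing on $\mathbf{H}^1({\mathcal F}^{\bullet})$ via \v{C}ech representatives, proves non-degeneracy by the self-duality ${\mathcal F}^1\cong({\mathcal F}^0)^{\vee}\otimes\Omega^1_{\cC/T}$ together with Serre duality (spelled out through the hypercohomology long exact sequence and the five lemma), and proves $d$-closedness by precisely your preferred second route --- pulling back the known closed form $\tilde{\omega}$ on $M^{\balpha'}_{\cC/T}(\tilde{\bt},r,d)$ through local \'etale sections of the flag bundle $p$ and the immersion $\iota$ and checking $\tilde{\omega}|_U=(p|_U)^*(\omega)$ on representatives. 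The only divergence is minor: where you assert skew-symmetry formally from the signs in the cup product, the paper instead identifies $\omega(v,v)$ with the obstruction class to lifting a first-order deformation to second order, which vanishes by smoothness of the moduli space.
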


\begin{Remark}\rm
We need some assumption on $(r^{(i)}_j)$ for the existence of such $\balpha$.
For example, if some $r^{(i)}_j$ is coprime to $r$, then we can take such $\balpha$.
\end{Remark}

\begin{proof}
There are an affine scheme $U$ and an \'etale surjective morphism
$\tau:U\rightarrow M^{\balpha}_{\cC/T}(d,(r^{(i)}_j))$,
which factors through the moduli functor
${\mathcal M}^{\balpha}_{\cC/T}(d,(r^{(i)}_j))$,
namely there is a universal family
$(\tilde{E},\tilde{\nabla},\{\tilde{l}^{(i)}_j\})$ on
$\cC\times_T U$.
We define a complex ${\mathcal F}^{\bullet}$ on $\cC\times_T U$ by
\begin{align*}
 {\mathcal F}^0&:=
 \left\{ a\in{\mathcal End}(\tilde{E}) \left|
 \text{$a|_{(\tilde{t}_i)_U}(\tilde{l}^{(i)}_j)\subset\tilde{l}^{(i)}_j$ for any $i,j$}
 \right\}\right. \\
 {\mathcal F}^1&:=
 \left\{ b\in{\mathcal End}(\tilde{E})\otimes
 \Omega^1_{\cC/T}(\tilde{t}_1+\cdots+\tilde{t}_n)\left|
 \text{$\res_{(\tilde{t}_i)_U}(b)(\tilde{l}^{(i)}_j)\subset\tilde{l}^{(i)}_{j+1}$
 for any $i,j$}
 \right.\right\} \\
 \nabla^{\dag}&\colon {\mathcal F}^0\ni a\mapsto
 \tilde{\nabla}\circ a-a\circ\tilde{\nabla}
 \in{\mathcal F}^1.
\end{align*}
Let $\pi_U:\cC\times_T U\rightarrow U$ be the projection.
Then we have
\[
 \Theta_{U/T\times N(d,(r^{(i)}_j))}\cong
 \tau^*(\Theta_{M^{\balpha}_{\cC/T}(d,(r^{(i)}_j))/T\times N(d,(r^{(i)}_j))})
 \cong\mathbf{R}^1(\pi_U)_*({\mathcal F}^{\bullet}).
\]
Take an affine open covering $\cC\times_T U=\bigcup_{\alpha}U_{\alpha}$
and a member
$v\in H^0(U,\mathbf{R}^1(\pi_U)_*({\mathcal F}^{\bullet}))=
\mathbf{H}^1(\cC\times_T U,{\mathcal F}^{\bullet})$.
$v$ is given by
$[(\{u_{\alpha\beta}\},\{v_{\alpha}\})]$, where
$\{u_{\alpha\beta}\}\in C^1(\{U_{\alpha}\},{\mathcal F}^0)$,
$\{v_{\alpha}\}\in C^0(\{U_{\alpha}\},{\mathcal F}^1)$
and
\[
 d\{u_{\alpha\beta}\}=\{u_{\beta\gamma}-u_{\alpha\gamma}+u_{\alpha\beta}\}
 =0, \quad
 \nabla^{\dag}(\{u_{\alpha\beta}\})=\{v_{\beta}-v_{\alpha}\}=d\{v_{\alpha}\}.
\]
We define a pairing
\[
 \omega_U\colon\mathbf{H}^1(\cC\times_T U,{\mathcal F}^{\bullet})
 \times\mathbf{H}^1(\cC\times_T U,{\mathcal F}^{\bullet})
 \longrightarrow
 \mathbf{H}^2(\cC\times_T U,\Omega^{\bullet}_{\cC\times_T U/U})
 \cong H^0(U,{\mathcal O}_U)
\]
by
\[
 \omega_U([(\{u_{\alpha\beta}\},\{v_{\alpha}\})],[(\{u'_{\alpha\beta}\},\{v'_{\alpha}\})])
 :=
 [(\{\Tr(u_{\alpha\beta}\circ u'_{\beta\gamma})\},
 -\{\Tr(u_{\alpha\beta}\circ v'_{\beta})-\Tr(v_{\alpha}\circ u'_{\alpha\beta})\})].
\]
By definition, we can easily see that $\omega_U$ descends
to a pairing
\[
 \omega\colon\Theta_{M^{\balpha}_{\cC/T}(d,(r^{(i)}_j))/T\times N(d,(r^{(i)}_j))}
 \times\Theta_{M^{\balpha}_{\cC/T}(d,(r^{(i)}_j))/T\times N(d,(r^{(i)}_j))}
 \longrightarrow {\mathcal O}_{M^{\balpha}_{\cC/T}(d,(r^{(i)}_j))}
\]
Take any $\mathbf{C}$-valued point
$y=(E,\nabla,\{l^{(i)}_j\})\in
M^{\balpha}_{\cC/T}(d,(r^{(i)}_j))(\mathbf{C})$.
over $(x,\bnu)\in T\times N(d,(r^{(i)}_j))$.
Then a tangent vector
$v\in \Theta_{M^{\balpha}_{\cC/T}(d,(r^{(i)}_j))/T\times N(d,(r^{(i)}_j))}(y)$
corresponds to a $\mathbf{C}[t]/(t^2)$-valued point
$(E^v,\nabla^v,\{(l^v)^{(i)}_j\})\in
{\mathcal M}^{\balpha}_{\cC/T}(d,(r^{(i)}_j))_{(x,\bnu)}(\mathbf{C}[t]/(t^2))$
such that
$(E^v,\nabla^v,\{(l^v)^{(i)}_j\})\otimes\mathbf{C}[t]/(t)
\cong (E,\nabla,\{l^{(i)}_j\})$.
We can check that $\omega(v,v)$ is nothing but
the obstruction class for the lifting of
$(E^v,\nabla^v,\{(l^v)^{(i)}_j\})$ to a member of
\[
 {\mathcal M}^{\balpha}_{\cC/T}(d,(r^{(i)}_j))_{(x,\bnu)}(\mathbf{C}[t]/(t^3)).
\]
Since $M^{\balpha}_{\cC/T}(d,(r^{(i)}_j))_{(x,\bnu)}$ is smooth, we have $\omega(v,v)=0$.
Thus $\omega$ is skew symmetric.

Let
\[
 \xi\colon\Theta_{M^{\balpha}_{\cC/T}(d,(r^{(i)}_j))/T\times N(d,(r^{(i)}_j))}
 \longrightarrow
 \Theta_{M^{\balpha}_{\cC/T}(d,(r^{(i)}_j))/T\times N(d,(r^{(i)}_j))}^{\vee}
\]
be the homomorphism induced by $\omega$.
For any $\mathbf{C}$-valued point $y\in M^{\balpha}_{\cC/T}(d,(r^{(i)}_j))(\mathbf{C})$
\[
 \xi(y)\colon\mathbf{H}^1({\mathcal F}^{\bullet}(y))=
 \Theta_{M^{\balpha}_{\cC/T}(d,(r^{(i)}_j))/T\times N(d,(r^{(i)}_j))}(y)
 \longrightarrow
 \Theta_{M^{\balpha}_{\cC/T}(d,(r^{(i)}_j))/T\times N(d,(r^{(i)}_j))}^{\vee}(y)
 =\mathbf{H}^1({\mathcal F}^{\bullet}(y))^{\vee}
\]
induces an exact commutative diagram
\[
 \begin{CD}
  H^0({\mathcal F}^0(y)) @>>> H^0({\mathcal F}^1(y))
  @>>> \mathbf{H}^1({\mathcal F}^{\bullet}(y)) @>>>
  H^1({\mathcal F}^0(y)) @>>> H^1({\mathcal F}^1(y)) \\
  @V b_1 VV @V b_2 VV @V \xi(y) VV @V b_3 VV @V b_4 VV \\
  H^1({\mathcal F}^1(y))^{\vee} @>>> H^1({\mathcal F}^0(y))^{\vee}
  @>>> \mathbf{H}^1({\mathcal F}^{\bullet}(y))^{\vee} @>>>
  H^0({\mathcal F}^1(y))^{\vee} @>>> H^0({\mathcal F}^0(y))^{\vee},
 \end{CD}
\]
where $b_1,b_2,b_3,b_4$ are isomorphisms induced by
${\mathcal F}^0(y)\cong{\mathcal F}^1(y)^{\vee}\otimes\Omega^1_{\cC_y}$,
${\mathcal F}^1(y)\cong{\mathcal F}^0(y)^{\vee}\otimes\Omega^1_{\cC_y}$
and Serre duality.
Thus $\xi(y)$ becomes an isomorphism by the five lemma.

Now we will prove that $\omega$ is $d$-closed.
As is explained in section \ref{riemann-hilbert},
We have a smooth projective surjective morphism
$p\colon\tilde{M}^{\balpha}_{\cC/T}(d,(r^{(i)}_j))_{(x,\bnu)}\rightarrow
M^{\balpha}_{\cC/T}(d,(r^{(i)}_j))_{(x,\bnu)}$
and a closed immersion
$\iota\colon\tilde{M}^{\balpha}_{\cC/T}(d,(r^{(i)}_j))_{(x,\bnu)}
\hookrightarrow M^{\balpha'}_{\cC/T}(\tilde{\bt},r,d)_{(x,\bnu')}$.
Take any closed point $y\in M^{\balpha}_{\cC/T}(d,(r^{(i)}_j))_{(x,\bnu)}$.
Then there is a subscheme
$U\subset\tilde{M}^{\balpha}_{\cC/T}(d,(r^{(i)}_j))_{(x,\bnu)}$
such that $p|_U\colon U\rightarrow M^{\balpha}_{\cC/T}(d,(r^{(i)}_j))_{(x,\bnu)}$
is \'etale and $y\in p(U)$.
We can take a closed point $y'\in U$ such that $p(y')=y$.
Then $y$ corresponds to a member
$(E,\nabla,\{l^{(i)}_j\})\in M^{\balpha}_{\cC/T}(d,(r^{(i)}_j))_{(x,\bnu)}$
and $y'$ corresponds to a member
$(E,\nabla,\{l^{(i)}_j\},\{V^{(i)}_{j,k}\})$.
Take tangent vectors $v,w\in\Theta_U(y')$.
Since $\Theta_U(y')\cong\Theta_{M^{\balpha}_{\cC/T}(d,(r^{(i)}_j))_{(x,\bnu)}}(y)$,
we can regard $v,w$ as elements of
$\mathbf{H}^1({\mathcal F}^{\bullet}(y))$.
Put
\begin{align*}
 \tilde{\mathcal F}^0&:=\left\{ a\in{\mathcal End}(E)\left|
 \begin{array}{l}
 \text{$a|_{(\tilde{t}_i)_x}(l^{(i)}_j)\subset l^{(i)}_j$ for any $i,j$ and} \\
 \text{for the induced morphism
 $a^{(i)}_j:l^{(i)}_j/l^{(i)}_{j+1}\rightarrow l^{(i)}_j/l^{(i)}_{j+1}$} \\
 \text{we have $(a^{(i)}_j\otimes\mathrm{id})(V^{(i)}_{j,k})\subset
 V^{(i)}_{j,k}$ for any $i,j,k$}
 \end{array}
 \right.\right\}, \\
 \tilde{\mathcal F}^1&:=\left\{ b\in{\mathcal End}(E)\otimes\Omega^1_C(D)
 \left|
 \begin{array}{l}
  \text{$\res_{(\tilde{t}_i)_x}(b)(l^{(i)}_j)\subset l^{(i)}_j$ for any $i,j$ and} \\
  \text{for the induced morphism
  $b^{(i)}_j:l^{(i)}_j/l^{(i)}_{j+1}\rightarrow l^{(i)}_j/l^{(i)}_{j+1}$} \\
  \text{we have $b^{(i)}_j(V^{(i)}_{j,k})\subset V^{(i)}_{j,k+1}$ for any $i,j,k$}
 \end{array}
 \right.\right\}, \\
 \tilde{\nabla}^{\dag}&\colon\tilde{\mathcal F}^0\ni a \mapsto \nabla\circ a-a\circ\nabla
 \in\tilde{\mathcal F}^1.
\end{align*}
We have a canonical commutative diagram
\[
 \begin{CD}
  {\mathcal F}^0(y) @<<< \tilde{\mathcal F}^0 \\
  @V\nabla^{\dag}VV @VV\tilde{\nabla}^{\dag}V \\
  {\mathcal F}^1(y) @>>> \tilde{\mathcal F}^1.
 \end{CD}
\]
Then we have
\begin{align*}
 \Theta_U(y')&\cong\mathbf{H}^1({\mathcal F}^{\bullet}(y)), \\
 \Theta_{\tilde{M}^{\balpha}_{\cC/T}(d,(r^{(i)}_j))_{(x,\bnu)}}(y')&\cong
 \mathbf{H}^1(\tilde{\mathcal F}^0\rightarrow{\mathcal F}^1(y)), \\
 \Theta_{M^{\balpha'}_{\cC/T}(\tilde{\bt},r,d)_{(x,\bnu')}}(y')&\cong
 \mathbf{H}^1(\tilde{\mathcal F}^0\rightarrow\tilde{\mathcal F}^1).
\end{align*}
and canonical homomorphisms
\[
 \Theta_{M^{\balpha'}_{\cC/T}(\tilde{\bt},r,d)_{(x,\bnu')}}(y')\cong
 \mathbf{H}^1(\tilde{\mathcal F}^0\rightarrow\tilde{\mathcal F}^1)
 \hookleftarrow \mathbf{H}^1(\tilde{\mathcal F}^0\rightarrow{\mathcal F}^1(y'))
 \stackrel{p_*}\longrightarrow\mathbf{H}^1({\mathcal F}^{\bullet}(y'))
 \cong\Theta_{M^{\balpha}_C(d,(r^{(i)}_j))_{\bnu}}(y).
\]
There is a canonical symplectic form $\tilde{\omega}$
on $M^{\balpha'}_{\cC/T}(\tilde{\bt},r,d)_{(x,\bnu')}$.
There exists a splitting
$s\colon \mathbf{H}^1({\mathcal F}^{\bullet}(y'))
\hookrightarrow
\mathbf{H}^1(\tilde{F}^0\rightarrow\tilde{F}^1)$
of
$p_*\colon\mathbf{H}^1(\tilde{F}^0\rightarrow{\mathcal F}^1(y'))
\rightarrow\mathbf{H}^1({\mathcal F}^{\bullet}(y'))$
determined by $U$.
Take an affine open covering $\cC_x=\bigcup_{\alpha}U_{\alpha}$.
The tangent vectors $v,w$ can be represented by
$(\{a_{\alpha\beta}\},\{b_{\alpha}\})$ and
$(\{a'_{\alpha\beta}\},\{b'_{\alpha}\})$,
respectively,
where
$\{a_{\alpha\beta}\},\{a'_{\alpha\beta}\}
\in C^1(\{U_{\alpha}\},{\mathcal F}^0(y'))$
and
$\{b_{\alpha}\},\{b'_{\alpha}\}\in
C^0(\{U_{\alpha}\},{\mathcal F}^1(y'))$.
Replacing $a_{\alpha\beta},a'_{\alpha\beta},b_{\alpha},b'_{\alpha}$,
we may have that
$s(v)$ and $s(w)$ can be represented by
$(\{a_{\alpha\beta}\},\{b_{\alpha}\})$ and
$(\{a'_{\alpha\beta}\},\{b'_{\alpha}\})$,
respectively with
$\{a_{\alpha\beta}\},\{a'_{\alpha\beta}\}\in
C^1(\{U_{\alpha}\},\tilde{\mathcal F}^0)$.
Then we have
\[
 \tilde{\omega}(\iota_*(s(v)),\iota_*(s(w)))=
 [(\{\Tr(a_{\alpha\beta}\circ a'_{\beta\gamma})\},
 -\{\Tr(a_{\alpha\beta}\circ b'_{\beta})-\Tr(b_{\alpha}\circ a'_{\alpha\beta})\})]
 =\omega(v,w),
\]
which means that $\tilde{\omega}|_U=(p|_U)^*(\omega)$.
Since $\tilde{\omega}$ is $d$-closed,
$(p|_U)^*(\omega)$ is also $d$-closed.
Thus $\omega$ is $d$-closed, because
$p|_U\colon U\rightarrow M^{\balpha}_{\cC/T}(d,(r^{(i)}_j))_{(x,\bnu)}$
is \'etale.
\end{proof}

\section{Isomonodromic deformation}

Let $T$ be an algebraic scheme over $\mathbf{C}$,
which is a smooth covering of the moduli stack
of $n$-pointed smooth projective curves of genus $g$.
Take a universal family $(\cC,\tilde{\bt})$ over $T$.
For the spectral type $(r^{(i)}_j)$, assume that we can take
a parabolic weight $\balpha$ such that
$\balpha$-stable $\Leftrightarrow$ $\balpha$-semistable.
We choose $\balpha'$ as in section \ref{riemann-hilbert}.
As is stated in [\cite{Inaba-1}, Propostion 8.1],
there is an algebraic splitting
\[
 D\colon\pi^*(\Theta_T)\longrightarrow \Theta_{M^{\balpha'}_{\cC/T}(\tilde{\bt},r,d)}
\]
of the canonical surjection
$\pi_*\colon\Theta_{M^{\balpha'}_{\cC/T}(\tilde{\bt},r,d)}\rightarrow
\pi^*(\Theta_T)$,
where $\pi\colon M^{\balpha'}_{\cC/T}(\tilde{\bt},r,d)\rightarrow T$
is the structure morphism.
By the construction of $D$ in [\cite{Inaba-1}, Proposition 8.1],
we can see that the image of
$D|_{\tilde{M}^{\balpha}_{\cC/T}(d,(r^{(i)}_j))}$
is contained in
$\Theta_{\tilde{M}^{\balpha}_{\cC/T}(d,(r^{(i)}_j))}
\subset \Theta_{M^{\balpha'}_{\cC/T}(\tilde{\bt},r,d)}|
_{\tilde{M}^{\balpha}_{\cC/T}(d,(r^{(i)}_j))}$.
Since $D(\pi^*(\Theta_T))\subset\Theta_{M^{\balpha'}_{\cC/T}(\tilde{\bt},r,d)}$
satisfies the integrability condition,
$D|_{\tilde{M}^{\balpha}_{\cC/T}(d,(r^{(i)}_j))}
((\pi|_{\tilde{M}^{\balpha}_{\cC/T}(d,(r^{(i)}_j))})^*(\Theta_T))\subset
\Theta_{\tilde{M}^{\balpha}_{\cC/T}(d,(r^{(i)}_j))}$
also satisfies the integrability condition.
Consider the projective surjective morphism
\[
 p\colon\tilde{M}^{\balpha}_{\cC/T}(d,(r^{(i)}_j))
 \longrightarrow M^{\balpha}_{\cC/T}(d,(r^{(i)}_j))
\]
as in section \ref{riemann-hilbert}.
Note that the geometric fibers of $p$ are irreducible.
Then we obtain a homomorphism
\[
 D'\colon (\pi')^*(\Theta_T)\stackrel{\sim}\longrightarrow
 p_*(\pi^*(\Theta_T))\longrightarrow
 p_*(\Theta_{\tilde{M}^{\balpha}_{\cC/T}(d,(r^{(i)}_j))})
 \longrightarrow
 p_*(p^*(\Theta_{M^{\balpha}_{\cC/T}(d,(r^{(i)}_j))}))
 \stackrel{\sim}\longrightarrow
 \Theta_{M^{\balpha}_{\cC/T}(d,(r^{(i)}_j))},
\]
which is a splitting of the canonical homomorphism
$\pi'_*\colon\Theta_{M^{\balpha}_{\cC/T}(d,(r^{(i)}_j))}
\rightarrow
(\pi')^*(\Theta_T)$,
where $\pi'\colon M^{\balpha}_{\cC/T}(d,(r^{(i)}_j))\rightarrow T$
is the structure morphism.
Since 
$D|_{\tilde{M}^{\balpha}_{\cC/T}(d,(r^{(i)}_j))}
((\pi|_{\tilde{M}^{\balpha}_{\cC/T}(d,(r^{(i)}_j))})^*(\Theta_T))\subset
\Theta_{\tilde{M}^{\balpha}_{\cC/T}(d,(r^{(i)}_j))}$
satisfies the integrability condition,
$D'((\pi')^*(\Theta_T))\subset
\Theta_{M^{\balpha}_{\cC/T}(d,(r^{(i)}_j))}$
also satisfies the integrability condition.
The corresponding foliation
${\mathcal F}_{M^{\balpha}_{\cC/T}(d,(r^{(i)}_j))}$
is nothing but the isomonodromic deformation.

\begin{Theorem}\label{gpp-thm}
 Assume that $rn-2r-2>0$ if $g=0$, $n>1$ if $g=1$
 and $n>0$ if $g\geq 2$.
 Moreover, assume that we can take $\balpha$ so that
 $\balpha$-stable $\Leftrightarrow$ $\balpha$-semistable. 
 Then the foliation ${\mathcal F}_{M^{\balpha}_{\cC/T}(d,(r^{(i)}_j))}$
 satisfies the geometric Painlev\'e property,
 namely for any path $\gamma\colon [0,1]\rightarrow T$
 and for any point $x\in M^{\balpha}_{\cC/T}(d,(r^{(i)}_j))$ with
 $\pi'(x)=\gamma(0)$, there is a unique path
 $\tilde{\gamma}\colon [0,1]\rightarrow M^{\balpha}_{\cC/T}(d,(r^{(i)}_j))$
 which lies in a leaf of ${\mathcal F}_{M^{\balpha}_{\cC/T}(d,(r^{(i)}_j))}$
 such that $\pi'\circ\tilde{\gamma}=\gamma$
 and that $\tilde{\gamma}(0)=x$.
\end{Theorem}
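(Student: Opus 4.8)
The plan is to transport the geometric Painlev\'e property from the larger moduli space $M^{\balpha'}_{\cC/T}(\tilde{\bt},r,d)$, where it is already known by \cite{Inaba-1}, down to $M^{\balpha}_{\cC/T}(d,(r^{(i)}_j))$ along the correspondence
\[
 M^{\balpha}_{\cC/T}(d,(r^{(i)}_j))
 \overset{p}{\longleftarrow}
 \tilde{M}^{\balpha}_{\cC/T}(d,(r^{(i)}_j))
 \overset{\iota}{\hookrightarrow}
 M^{\balpha'}_{\cC/T}(\tilde{\bt},r,d)
\]
of Section \ref{riemann-hilbert}, in which $p$ is smooth projective surjective with irreducible fibers and $\iota$ is a closed immersion. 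Recall that the foliation ${\mathcal F}_{M^{\balpha}_{\cC/T}(d,(r^{(i)}_j))}$ is the horizontal foliation of the integrable splitting $D'$, which by its very construction is $dp$ applied to the restriction $D|_{\tilde{M}^{\balpha}_{\cC/T}(d,(r^{(i)}_j))}$ of the isomonodromy splitting $D$ on the big space, using the isomorphism $p_*p^*\cong\mathrm{id}$ valid because $p$ is proper with irreducible fibers.

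Uniqueness is the formal direction and needs neither the numerical hypotheses nor properness. Since $D'$ is a splitting of the tangent sequence of $\pi'$, the subbundle $D'((\pi')^*\Theta_T)$ is everywhere complementary to the relative tangent bundle and $d\pi'$ restricts to an isomorphism from it onto $\Theta_T$. Hence a path $\tilde\gamma$ lying in a leaf with $\pi'\circ\tilde\gamma=\gamma$ is exactly a solution of the first-order system $\tilde\gamma'(t)=D'(\gamma'(t))|_{\tilde\gamma(t)}$. Local existence and uniqueness for this ODE, together with the connectedness of $[0,1]$, show that the lift with $\tilde\gamma(0)=x$ is unique whenever it exists; in particular the construction below will not depend on the auxiliary choices made.

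For existence, first use the surjectivity of $p$ to pick $\tilde{x}$ with $p(\tilde{x})=x$, and set $z=\iota(\tilde{x})$, a point of $M^{\balpha'}_{\cC/T}(\tilde{\bt},r,d)$ lying over $\gamma(0)$. The hypotheses $rn-2r-2>0$ (resp.\ $n>1$, resp.\ $n>0$) are precisely those under which \cite{Inaba-1} proves the geometric Painlev\'e property on $M^{\balpha'}_{\cC/T}(\tilde{\bt},r,d)$; applying it gives a unique horizontal lift $\tilde\gamma'\colon[0,1]\to M^{\balpha'}_{\cC/T}(\tilde{\bt},r,d)$ of $\gamma$ with $\tilde\gamma'(0)=z$. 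Next I claim that $\tilde\gamma'$ remains in the closed subscheme $\iota(\tilde{M}^{\balpha}_{\cC/T}(d,(r^{(i)}_j)))$. By the invariance established in this section, the image of $D|_{\tilde{M}^{\balpha}_{\cC/T}(d,(r^{(i)}_j))}$ lies in $\Theta_{\tilde{M}^{\balpha}_{\cC/T}(d,(r^{(i)}_j))}$, so the foliation is everywhere tangent to $\iota(\tilde{M}^{\balpha}_{\cC/T}(d,(r^{(i)}_j)))$ and short-time horizontal lifts inside it exist; by the ambient uniqueness these agree with $\tilde\gamma'$, so $\{t:\tilde\gamma'(t)\in\iota(\tilde{M}^{\balpha}_{\cC/T}(d,(r^{(i)}_j)))\}$ is open, closed and nonempty, hence all of $[0,1]$. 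Writing $\tilde\gamma'=\iota\circ\tilde\eta$ and setting $\tilde\gamma:=p\circ\tilde\eta$, the factorization of $D'$ through $p_*(D|_{\tilde{M}^{\balpha}_{\cC/T}(d,(r^{(i)}_j))})$ gives, by the chain rule, $\tilde\gamma'(t)=D'(\gamma'(t))|_{\tilde\gamma(t)}$, while $\pi'\circ\tilde\gamma=\gamma$ and $\tilde\gamma(0)=p(\tilde{x})=x$. This is the desired lift.

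The only step that can genuinely fail, and hence the heart of the matter, is the global existence of $\tilde\gamma'$ over the entire interval, i.e.\ the completeness of the isomonodromic flow in the noncompact fibers. This is exactly the geometric Painlev\'e property on $M^{\balpha'}_{\cC/T}(\tilde{\bt},r,d)$, whose proof in \cite{Inaba-1} rests on the properness of the Riemann--Hilbert morphism under the stated numerical hypotheses: the target character variety has the relevant compactness, so the lift cannot escape to infinity. I expect the remaining bookkeeping, namely the tangency-plus-connectedness argument confining $\tilde\gamma'$ to $\tilde{M}^{\balpha}_{\cC/T}(d,(r^{(i)}_j))$ and the verification that the three relative connections are compatible under $p$ and $\iota$ so that $p\circ\tilde\eta$ is again horizontal, to be routine given the apparatus of Section \ref{riemann-hilbert}, with no new geometric input beyond the properness already invoked.
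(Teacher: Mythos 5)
Your proposal is correct and takes essentially the same route as the paper's own proof: lift $x$ through the surjective morphism $p\colon\tilde{M}^{\balpha}_{\cC/T}(d,(r^{(i)}_j))\rightarrow M^{\balpha}_{\cC/T}(d,(r^{(i)}_j))$, invoke the geometric Painlev\'e property of $M^{\balpha'}_{\cC/T}(\tilde{\bt},r,d)$ from \cite{Inaba-1}, use the tangency of $D$ to $\tilde{M}^{\balpha}_{\cC/T}(d,(r^{(i)}_j))$ to confine the horizontal lift there, and push the resulting path down by $p$. The only difference is that you make explicit what the paper compresses into ``by construction'' --- the open-closed confinement argument and the ODE uniqueness for the splitting $D'$ --- which is a faithful elaboration rather than a different proof.
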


\begin{proof}
Take any path $\gamma\colon [0,1]\rightarrow T$
and a point $x\in M^{\balpha}_{\cC/T}(d,(r^{(i)}_j))$
such that $\pi'(x)=\gamma(0)$.
Since
$p\colon\tilde{M}^{\balpha}_{\cC/T}(d,(r^{(i)}_j))\rightarrow
M^{\balpha}_{\cC/T}(d,(r^{(i)}_j))$ is surjective,
there is a point $\tilde{x}\in \tilde{M}^{\balpha}_{\cC/T}(d,(r^{(i)}_j))$
such that $p(\tilde{x})=x$.
By the geometric Painlev\'e property stated in
[\cite{Inaba-1}, Theorem 2.3],
there is a unique path
$\gamma'\colon [0,1]\rightarrow M^{\balpha'}_{\cC/T}(\tilde{\bt},r,d)$
such that $\gamma'(0)=\tilde{x}$, $\pi(\tilde{x})=\gamma(0)$
and that the image of $\gamma'$ lies in a leaf of the foliation determined by
$D(\pi^*(\Theta_T))\subset\Theta_{M^{\balpha'}_{\cC/T}(\tilde{\bt},r,d)}$.
By construction, the image of $\gamma'$ in fact lies in
$\tilde{M}^{\balpha}_{\cC/T}(d,(r^{(i)}_j))$.
So the path $p\circ\gamma'$ satisfies the desired condition.
\end{proof}

\end{document}